\DeclareRobustCommand{\SkipTocEntry}[5]{}
\numberwithin{equation}{section}
\newcounter{dummy}
\numberwithin{dummy}{section}
\newtheorem{theorem}[dummy]{Theorem}
\newtheorem{corollary}[dummy]{Corollary}
\newtheorem{definition}[dummy]{Definition}
\newtheorem{lemma}[dummy]{Lemma}
\newtheorem{proposition}[dummy]{Proposition}
\theoremstyle{remark}
\newtheorem{remark}[dummy]{Remark}
\theoremstyle{definition}
\newtheorem{example}[dummy]{Example}
\newcommand{\supp}{\textnormal{supp}}
\newcommand{\Lip}{\textnormal{Lip}}
\newcommand{\DD}{\mathcal{D}}
\newcommand{\FF}{\mathcal{F}}
\newcommand{\GG}{\mathcal{G}}
\newcommand{\UU}{\mathcal{U}}
\newcommand{\MM}{\mathcal{M}}
\newcommand{\R}{\mathbb R}
\newcommand{\N}{\mathbb N}
\newcommand{\X}{\mathbf X}
\newcommand{\Y}{\mathbf Y}
\newcommand{\x}{\mathbf x}
\newcommand{\y}{\mathbf y}
\newcommand{\z}{\mathbf z}
\newcommand{\B}{\mathbf B}
\newcommand{\ZZ}{\mathbf Z}
\newcommand{\var}[1]{#1\textnormal{-var}}
\newcommand{\pvar}{p\textnormal{-var}}
\newcommand{\pprimevar}{p'\textnormal{-var}}
\newcommand{\qvar}{q\textnormal{-var}}
\newcommand{\onevar}{1\textnormal{-var}}
\newcommand{\pHol}{1/p\textnormal{-H{\"o}l}}
\newcommand{\EEE}[1]{\mathbb E \left[ #1 \right]}
\newcommand{\Law}[1]{\textnormal{Law}\left[ #1 \right]}
\newcommand{\EEEover}[2]{\mathbb E^{#1} \left[ #2 \right]}
\newcommand{\PPPover}[2]{\mathbb P^{#1} \left[ #2 \right]}
\newcommand{\Diff}{\textnormal{Diff}}
\newcommand{\PPP}[1]{\mathbb{P} \left[ #1 \right]}
\newcommand{\id}{\textnormal{id}}
\newcommand{\norm}[1]{\left|\left|#1 \right|\right|}
\newcommand{\normt}[1]{\left|\left|\left|#1 \right|\right|\right|}
\newcommand{\norms}[1]{\left|#1 \right|}
\newcommand{\normc}{\norm{\cdot}}
\newcommand{\1}[1]{\mathbf 1 \{#1\}}
\newcommand{\LLL}{\mathbf L}
\newcommand{\Lyons}{S}
\newcommand{\uu}{\mathfrak u}
\newcommand{\g}{\mathfrak g}
\newcommand{\h}{\mathfrak h}
\newcommand{\floor}[1]{\lfloor #1 \rfloor}
\newcommand{\roof}[1]{\lceil #1 \rceil}
\newcommand{\midset}{\,\middle|\,}
\newcommand\restr[2]{{\left.\kern-\nulldelimiterspace % automatically resize the bar with \right
  #1 % the function
  \vphantom{\big|} % pretend it's a little taller at normal size
  \right|_{#2} % this is the delimiter
  }}
\def\eqd{\,{\buildrel \DD \over =}\,}
\def\convd{\,{\buildrel \DD \over \rightarrow}\,}
\begin{document}

\title{Random walks and L{\'e}vy processes as rough paths}

\author{Ilya Chevyrev}
%    Address of record for the research reported here
\address{I. Chevyrev,
Mathematical Institute,
University of Oxford,
Andrew Wiles Building,
Radcliffe Observatory Quarter,
Woodstock Road,
Oxford OX2 6GG,
United Kingdom}
    %Current address
%\curraddr{Mathematical Institute,University of Oxford, Cleveland, Oxford OX2 6GG}
\email{chevyrev@maths.ox.ac.uk}
%    \thanks will become a 1st page footnote.
\thanks{The author is supported by a Junior Research Fellowship of St John's College, Oxford.}

%    General info
\subjclass[2010]{Primary 60G51; Secondary 60H10}
%60G51 Processes with independent increments; Lévy processes
%60H10 Stochastic ordinary differential equations
%60J25 Continuous-time Markov processes on general state spaces
%60F05 Central limit and other weak theorems
%60B05 Probability measures on topological spaces
%60B11 Probability theory on linear topological spaces
%60B15 Probability measures on groups or semigroups, Fourier transforms, factorization
%46H15 Representations of topological algebras
%22D30 Induced representations
%46A11 Spaces determined by compactness or summability properties (nuclear spaces, Schwartz spaces, Montel spaces, etc.)

%\date{November 09, 2014}
% and, in revised form, June 22, 2001.}

%\dedicatory{This paper is dedicated to our advisors.}

\keywords{Homogeneous groups, rough paths, L{\'e}vy processes, random walks, tightness of $p$-variation, stochastic flows, characteristic functions of signatures}

\begin{abstract}
We consider random walks and L{\'e}vy processes in a homogeneous group $G$. For all $p > 0$, we completely characterise (almost) all $G$-valued L{\'e}vy processes whose sample paths have finite $p$-variation, and give sufficient conditions under which a sequence of $G$-valued random walks converges in law to a L{\'e}vy process in $p$-variation topology. In the case that $G$ is the free nilpotent Lie group over $\R^d$, so that processes of finite $p$-variation are identified with rough paths, we demonstrate applications of our results to weak convergence of stochastic flows and provide a L{\'e}vy--Khintchine formula for the characteristic function of the signature of a L{\'e}vy process. At the heart of our analysis is a criterion for tightness of $p$-variation for a collection of c{\`a}dl{\`a}g strong Markov processes.
\end{abstract}

\maketitle

%\tableofcontents

\section{Introduction}

This paper focuses on several questions regarding L{\'e}vy processes and random walks in homogeneous groups, with a particular focus on applications to rough paths theory.
Let $G$ be a homogeneous group (in the sense of~\cite{FollandStein82}) equipped with a sub-additive homogeneous norm and the corresponding left-invariant metric. We can summarise the three main results of the paper as follows.

\begin{itemize}
\item (Theorem~\ref{thm_LevyFinitepVar}) Given a L{\'e}vy process $\X$ in $G$, we determine (almost) all values of $p > 0$ for which the sample paths of $\X$ have almost surely finite $p$-variation.

\item (Theorem~\ref{thm:convToLevy}) We give sufficient conditions for a sequence of (interpolated and reparametrised) random walks in $G$ to converge weakly to a (interpolated and reparametrised) L{\'e}vy process in $G$ in $p$-variation topology.

\item (Theorem~\ref{thm_LKFormula}) In the case that $G = G^N(\R^d)$, the step-$N$ free nilpotent Lie group over $\R^d$, we determine a L{\'e}vy--Khintchine formula for the characteristic function (in the sense of~\cite{ChevyrevLyons16}) of the signature of the random rough path constructed from a L{\'e}vy process in $G$.
\end{itemize}

We apply the second of these results in the context of rough paths to show weak convergence of stochastic flows in several examples. Notably, we provide a significant generalisation of a result of Kunita~\cite{Kunita95} and of a related result of Breuillard, Friz and Huesmann~\cite{Breuillard09}.

We take a moment to discuss how our work relates to the appearance of c{\`a}dl{\`a}g rough paths in the current literature. Friz and Shekhar~\cite{FrizShekhar12} recently introduced a broad extension of rough paths theory to the c{\`a}dl{\`a}g setting. Their work in particular generalises the notion of rough integration and RDEs and significantly extends earlier work of Williams~\cite{Williams01} who gave pathwise solutions to differential equations driven by L{\'e}vy processes in $\R^d$.

As a family of c{\`a}dl{\`a}g rough paths of particular interest, L{\'e}vy process in $G^N(\R^d)$ of finite $p$-variation for some $1 \leq p < N +1$, were studied in~\cite{FrizShekhar12}. Such L{\'e}vy $p$-rough paths bear a resemblance to Markovian rough paths constructed from subelliptic Dirichlet forms on $L^2(G^N(\R^d))$, first studied in~\cite{FrizVictoir08} and recently in~\cite{CassOgrodnik14, CO17, ChevyrevLyons16}, in the sense that both processes may be viewed as stochastic rough paths whose evolution depends entirely on its first $N$ iterated integrals.

The method we employ here to give meaning to c{\`a}dl{\`a}g rough paths is to connect left- and right-limits with continuous paths and treat the resulting object as a classical rough path. We therefore do not address directly the concept of a c{\`a}dl{\`a}g RDE in this paper, but emphasise that our methods relate closely to Marcus SDEs and that Theorems~\ref{thm_LevyFinitepVar} and~\ref{thm_LKFormula} can be seen as generalisations of two related results in~\cite{FrizShekhar12} (discussed further in Section~\ref{sec:LevyProcesses}). We mention however that the method of proof used for our main results, which is based on approximating a L{\'e}vy process by a sequence of random walks, is different to the methods used in~\cite{FrizShekhar12}.

We also point out that our methods treat general interpolations, which depend arbitrarily on the endpoints of jumps, on the same footing as the simpler linear interpolation used in Marcus SDEs. Examples of interest of such non-linear interpolations date back to the works of McShane~\cite{McShane72} and Sussman~\cite{Sussmann91} on approximations of Brownian motion (discussed further in Examples~\ref{ex_nonLinearInter} and~\ref{ex_pertubedWalk}), and recently in the work of Flint, Hambly and Lyons~\cite{Flint16}.

A crucial result for our analysis, which we believe to be of independent interest, is a criterion for tightness of $p$-variation of strong Markov processes taking values in a Polish space (Theorem~\ref{thm:pVarTightCrit}). This result is a generalisation of the main result of Manstavi{\v c}ius~\cite{Manstavicius04}, which provides a criterion for a strong Markov process to have sample paths of a.s. finite $p$-variation.
Our proof of Theorem~\ref{thm:pVarTightCrit} is a simplification of the stopping times technique adopted in~\cite{Manstavicius04}.

Finally, we mention that while most applications presented in this paper concern geometric rough paths, and thus only require consideration of the free nilpotent Lie group, we have attempted to make statements in their natural level of generality. In particular, we believe that our results may prove to be of interest for studying random walks and L{\'e}vy processes in the Butcher group, which correspond to branched rough paths in the sense of~\cite{Gubinelli10, HairerKelly15} (see also Remark~\ref{remark:branchedRPs} below).

\addtocontents{toc}{\SkipTocEntry}
\subsection*{Outline of the paper}
In Section~\ref{sec:LieGroupIidArrays} we discuss iid arrays and L{\'e}vy processes taking values in a general Lie group.
Our only contribution in this section is the construction of a sequence of random walks $(\X^n)_{n \geq 1}$ associated with a L{\'e}vy process $\X$ such that $\X^n \convd \X$ in the Skorokhod topology, and for which tightness of $p$-variation is simple to verify.
In Section~\ref{sec:Hom} we recall several preliminary facts about homogeneous groups and spaces of paths of finite $p$-variation.

Section~\ref{sec:tightRandWalks} is devoted to the proof of Theorem~\ref{thm_cadlagPathTight}, which shows tightness of $p$-variation for a collection of random walks in a homogeneous group. This is a central result used in the proofs of the three main aforementioned theorems, which we state and prove in Section~\ref{sec:LevyProcesses}. In Section~\ref{subsubsec:StochFlows} we also provide several applications of Theorem~\ref{thm:convToLevy} to weak convergence of stochastic flows.

In Appendix~\ref{appendix:PathFuncs} we introduce the concept of path functions, which serve to connect the left- and right-limits of c{\`a}dl{\`a}g paths, and collect several technical results used throughout Section~\ref{sec:LevyProcesses}.
In Appendix~\ref{appendix:InfpVar} we describe conditions under which sample paths of a L{\'e}vy process possess infinite $p$-variation (used to complete the proof of Theorem~\ref{thm_LevyFinitepVar}).

\addtocontents{toc}{\SkipTocEntry}
\subsection*{Acknowledgements}

Part of this work is contained in the author's D.Phil thesis written under the guidance of Prof. Terry Lyons, to whom the author is sincerely grateful. The author would also like to thank Atul Shekhar, Guy Flint, and Prof. Peter Friz for helpful conversations on this topic, and the anonymous referees whose suggestions helped to significantly improve this paper.

\section{Iid arrays and L{\'e}vy processes in Lie groups}
\label{sec:LieGroupIidArrays}

\subsection{Notation}

Throughout this section, we fix a Lie group $G$ with Lie algebra $\g$, and identify $\g$ with the space of left-invariant vector fields on $G$.
Let $u_1, \ldots, u_m$ be a basis for $\g$. We equip $\g$ with the inner product for which $u_1,\ldots, u_m$ is an orthonormal basis. For an element $y \in \g$ we write $y = \sum_{i=1}^m y^iu_i$. When $x$ is an element of a normed space, we denote its norm by $|x|$.

We further fix an open neighbourhood $U \subset G$ of the identity $1_G \in G$, such that $U$ has compact closure and $\exp: \g \mapsto G$ is a diffeomorphism from a neighbourhood of zero in $\g$ onto $U$. Let $\xi_i \in C^\infty_c(G, \R)$ be smooth functions of compact support such that $\log(x) = \sum_{i=1}^m \xi_i(x) u_i$ for all $x \in U$ (that is, $\xi_i$ provide exponential coordinates of the first kind on $U$). We denote $\xi : G \mapsto \g, \xi(x) = \sum_{i=1}^m \xi_i(x) u_i$.

For a metric space $E$, denote by $D([0,T], E)$ the space of c{\`a}dl{\`a}g functions $\x : [0,T] \mapsto E$ equipped with the Skorokhod topology (see, e.g.,~\cite[Section~12]{Billingsley99}). We shall use the symbol $o$ to denote spaces of paths whose starting point is the identity element $1_G$. For example $D_o([0,T],G)$ denotes the set of all $\x \in D([0,T],G)$ such that $\x_0 = 1_G$.

\subsection{Preliminaries on iid arrays and L{\'e}vy processes}\label{subsec:iidArrays}

An \emph{array} in $G$ is a sequence of a finite collection of $G$-valued random variables $\left(X_{n1},\ldots, X_{nn}\right)_{n \geq 1}$. We call the array \emph{iid} if, for every $n \geq 1$, $X_{n1},\ldots, X_{nn}$ are iid.
We will always suppose that an iid array $X_{nj}$ is infinitesimal, i.e., $\lim_{n \rightarrow \infty} \PPP{X_{n1} \notin V} = 0$ for every neighbourhood $V$ of $1_G$. Furthermore, for all $n \geq 1$ we let
\[
B_n := \EEE{\xi(X_{n1})} \in \g,
\]
and for all $i,j\in\{1,\ldots, m\}$
\[
A_n^{i,j} := \EEE{\xi_i(X_{n1})\xi_j(X_{n1})}.
\]

For a collection of elements $x_{1},\ldots, x_{n}$ in $G$, we define the \emph{associated walk} $\x \in D_o([0,1],G)$ by
\[
\x_{t} =
\begin{cases} 1_G &\mbox{if } t \in [0,n^{-1}) \\ 
x_{1}\ldots x_{\floor{tn}} &\mbox{if } t \in [n^{-1},1],
\end{cases}
\]
and for an array $X_{nj}$, we refer to the associated random walk $\X^n$ to mean the sequence of associated walks built from the collections $(X_{n1},\ldots,X_{nn})$.

Recall that a (left) L{\'e}vy process in $G$ is a $D_o([0,T], G)$-valued random variable $\X$ with independent and stationary (right) increments. We refer to Liao~\cite{Liao04} for further details.

We call a \emph{L{\'e}vy triplet} (or simply \emph{triplet}) a collection $(A, B, \Pi)$ of an $m\times m$ covariance matrix $(A^{i,j})_{i,j = 1}^m$, an element $B = \sum_{i=1}^m B^i u_i \in \g$, and a L{\'e}vy measure $\Pi$ on $G$ (see~\cite[p. 12]{Liao04}).

A classical theorem of Hunt~\cite{Hunt56} asserts that for every L{\'e}vy process $\X$ in $G$, there exists a unique triplet $(A,B,\Pi)$ such that the generator of $\X$ is given for all $f \in C^2_0(G)$ and $x \in G$ by
\begin{multline*}
\lim_{t \rightarrow 0} t^{-1}\EEE{f(x\X_t) - f(x)} = \sum_{i=1}^m B^i (u_i f)(x) + \frac{1}{2}\sum_{i,j = 1}^m A^{i,j} (u_i u_j f)(x) \\
+ \int_G \left[ f(xy) - f(x) - \sum_{i = 1}^m \xi_i(y)(u_i f)(x)\right] \Pi(dy).
\end{multline*}
Conversely, every L{\'e}vy triplet gives rise to a unique L{\'e}vy process.

We will heavily use a characterisation due to Feinsilver~\cite{Feinsilver78} of when a $G$-valued random walk converges in law to a Markov process as a $D_o([0,1], G)$-valued random variable.
The following is a special case of the main results of~\cite{Feinsilver78}.

\begin{theorem}[Feinsilver~\cite{Feinsilver78}]\label{thm_iidConvLevy}
Let $X_{nj}$ be an iid array of $G$-valued random variables and $\X^n$ the associated random walk. Denote by $F_n$ the probability measure on $G$ associated with $X_{n1}$. Let $\X$ be a L{\'e}vy process in $G$ with triplet $(A,B,\Pi)$. 

Then $\X^n \convd \X$ as $D_o([0,1],G)$-valued random variables if and only if
\begin{enumerate}[label={(\arabic*)}]
\item \label{point_FConv} $\lim_{n \rightarrow \infty} n F_n(f) = \Pi(f)$ for every $f \in C_b(G)$ which is identically zero on a neighbourhood of $1_G$,
\item \label{point_BConv} $\lim_{n \rightarrow \infty} n B_n = B$, and
\item \label{point_AConv} for all $i,j\in\{1,\ldots, m\}$,
\[
\lim_{n \rightarrow \infty} nA^{i,j}_n = A^{i,j} + \int_G \xi_i(x)\xi_j(x) \Pi(dx).
\]
\end{enumerate}
\end{theorem}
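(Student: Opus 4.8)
The plan is to recognise the L\'evy process $\X$ through the martingale problem for its generator $L$ and to show that the discrete martingales attached to the walks $\X^n$ converge to the corresponding continuous ones; this is in essence the Stroock--Varadhan method as carried out in~\cite{Feinsilver78}, so I would organise the argument rather than reprove every estimate. For the \emph{sufficiency} direction, I would first invoke Hunt's theorem (\cite{Hunt56}): $C^2(G)$ is a core for $L$, and $\X$ is the unique law on $D_o([0,1],G)$ under which
\[
M^f_t := f(\X_t) - f(1_G) - \int_0^t Lf(\X_s)\,ds
\]
is a martingale for every $f \in C^2(G)$. Hence it suffices to prove (a) tightness of the laws of $\X^n$ on $D_o([0,1],G)$, and (b) that every subsequential weak limit of these laws solves this martingale problem; uniqueness of the solution then identifies the limit as the law of $\X$ and forces $\X^n \convd \X$.

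For (b), the key computation is the one-step expansion. Set $\delta_n f(x) := \EEE{f(xX_{n1})} - f(x)$, so that, by the iid assumption, $k \mapsto f(\X^n_{k/n}) - f(1_G) - \sum_{j=1}^{k}\delta_n f(\X^n_{(j-1)/n})$ is a martingale for the filtration generated by $X_{n1},\dots,X_{nk}$. Writing $xX_{n1} = x\exp(\xi(X_{n1}))$ on $U$ and Taylor-expanding to second order gives
\begin{align*}
\delta_n f(x) ={}& \sum_{i=1}^m B_n^i\,(u_i f)(x) + \tfrac12\sum_{i,j=1}^m A_n^{i,j}\,(u_iu_j f)(x) \\
&+ F_n\!\left(f(x\,\cdot) - f(x) - \sum_{i=1}^m\xi_i(\cdot)\,(u_if)(x)\right) + r_n(x),
\end{align*}
where $r_n(x)$ collects the cubic Taylor term on $U$ and the contribution of the event $\{X_{n1}\notin U\}$. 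Since the array is infinitesimal while $nB_n$, $nA_n$ and $n\int_U|\xi(x)|^2\,F_n(dx)$ stay bounded under \ref{point_FConv}--\ref{point_AConv}, a careful but routine estimate gives $\sup_{x\in G}\norms{n\,r_n(x)}\to 0$; combined with \ref{point_FConv}--\ref{point_AConv} this yields $n\,\delta_n f \to Lf$ uniformly on $G$ (all functions involved lie in $C_b(G)$). With Skorokhod convergence of $\X^n$ along a subsequence, the uniform integrability furnished by boundedness of $f$ and of $n\,\delta_n f$, and this uniform convergence, one passes to the limit in the discrete martingale to conclude that $M^f$ is a martingale under any subsequential limit law.

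For (a), I would check the compact containment condition --- $\X^n_1 = X_{n1}\cdots X_{nn}$ is tight because $nB_n$, $nA_n$, and the restriction of $nF_n$ to the complement of any neighbourhood of $1_G$ are bounded --- together with an Aldous-type stopping-time estimate obtained by inserting into the expansion above test functions in $C^2(G)$ that agree near $1_G$ with the square of a bounded metric $\rho$ inducing the topology of $G$; using the Markov property of the walk this shows $\rho(\X^n_{\tau_n},\X^n_{\tau_n+h_n})\to 0$ in probability for any stopping times $\tau_n$ and any $h_n\downarrow 0$. For the \emph{necessity} direction, suppose $\X^n\convd\X$. Given $f\in C_b(G)$ vanishing on a neighbourhood of $1_G$, the functional $\x\mapsto\sum_{0<s\le1}f(\x_{s-}^{-1}\x_s)$ is a.s.\ continuous at $\X$ in the Skorokhod topology, so $\sum_{j=1}^n f(X_{nj})\convd\sum_{0<s\le1}f(\X_{s-}^{-1}\X_s)$; taking expectations (the sums are uniformly integrable, $f$ being bounded and supported away from $1_G$) and using the L\'evy--It\^o description of $\X$ attached to its triplet (see~\cite{Liao04}) yields $nF_n(f)\to\Pi(f)$, which is \ref{point_FConv}. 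With the jump part thus pinned down, subtracting it leaves a walk whose $\xi$-truncated increments are infinitesimal with vanishing jump measure, and its convergence to the Gaussian-with-drift part of $\X$ forces $nB_n\to B$ and $nA_n^{i,j}\to A_{i,j}+\int_G\xi_i(x)\xi_j(x)\,\Pi(dx)$; equivalently, the discrete compensators $nB_n$ and $nA_n^{i,j}$ converge because the martingale problem for $\X$ tested against $\xi_i$ and $\xi_i\xi_j$ does, the correction term $\int_G\xi_i\xi_j\,d\Pi$ in \ref{point_AConv} being exactly the accumulated small-jump contribution to $nA_n^{i,j}$ that survives in the limit.

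The step I expect to be the main obstacle is, on the sufficiency side, the uniform control of the remainder $r_n$ after multiplication by $n$ together with the passage of the martingale property through the weak limit, and on the necessity side the clean separation of the Brownian drift and covariance of $\X$ from the accumulated small jumps --- the bookkeeping responsible for the $\int_G\xi_i\xi_j\,d\Pi$ term --- whereas the identification of the jump measure $\Pi$ itself is comparatively direct.
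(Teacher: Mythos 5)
Your sufficiency argument is sound in outline, and it is genuinely different from what the paper does: the paper proves nothing analytic here, it identifies $\X$ with the process having Feinsilver's parameters $(m,A,M)$ with $m(t)=\exp(tB)$, invokes Feinsilver's Corollary (which already contains \emph{both} directions of the equivalence), and then only converts Feinsilver's conditions (2'), (3') into~\ref{point_BConv} and~\ref{point_AConv} via the elementary Lemmas~\ref{lem_convAlgebraGroup} and~\ref{lem_ACondEquiv}. Your generator-convergence/martingale-problem route for the ``if'' direction is the standard machinery underlying Feinsilver's theorem and, modulo the usual care with the uniform $o(|\xi|^2)$ Taylor remainder and the splitting of the jump term near and away from $1_G$ on a continuity set (essentially the bookkeeping of Lemma~\ref{lem_ACondEquiv} and Proposition~\ref{prop_convViaScaling}), it can be carried out.

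The genuine gap is the necessity of~\ref{point_BConv} and~\ref{point_AConv}. Your argument for~\ref{point_FConv} is fine (after verifying $\sup_n nF_n(V^c)<\infty$, which does follow from tightness of the number of large jumps of $\X^n$). But ``subtracting the jump part'' of the walk is not an operation available in a noncommutative group, and the assertion that convergence of what remains to the Gaussian-with-drift part of $\X$ \emph{forces} $nB_n\to B$ and $nA_n^{i,j}\to A_{i,j}+\int_G\xi_i\xi_j\,d\Pi$ is precisely the statement to be proven, not a mechanism for proving it; similarly, the martingale problem is a property of the limit $\X$, and transferring it back to the discrete compensators $nB_n$, $nA_n$ presupposes exactly the convergence you are trying to establish, so as written the step is circular. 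Even in $\R^d$ this converse is the nontrivial half of the Gnedenko--Kolmogorov limit theorem, classically proved with characteristic functions, which are unavailable on a general Lie group. A workable repair would be compactness plus uniqueness: show that tightness of $(\X^n)_{n\geq 1}$ forces relative compactness of $(nB_n)$, $(nA_n)$ and of $nF_n$ restricted to complements of neighbourhoods of $1_G$, pass to a subsequential limit triplet, apply your sufficiency direction along that subsequence, and invoke Hunt's uniqueness of the triplet to identify it with $(A,B,\Pi)$. None of this appears in your sketch, and it is exactly the content the paper outsources to Feinsilver's Uniqueness of Parameters Theorem and his Corollary.
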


The following notion of a scaling function will be used throughout the paper.

\begin{definition}[Scaling function]\label{def_scalingFunc}
A continuous bounded function $\theta : G \rightarrow \R$ is called a \emph{scaling function} if
\begin{enumerate}[label={(\roman*)}]
\item $\theta(1_G) = 0$,
\item $\theta(x) > 0$ for all $x \neq 1_G$,
\item there exists $C > 0$ such that $|\xi|^2 \leq C\theta$, and
\item there exists $c > 0$ such that $\theta(x) > c$ for all $x \in G\setminus U$.
\end{enumerate}
Let $X_{nj}$ be an iid array in $G$. We say that $\theta$ \emph{scales} the array $X_{nj}$ if
\[
\sup_{n\geq 1} n\EEE{\theta(X_{n1})} < \infty.
\]
\end{definition}

The importance behind the above definition is that given a scaling function $\theta$ which scales $X_{nj}$, the rate with which $\theta$ decays at $1_G$ will determine the values of $p > 0$ for which the $p$-variation of the associated random walk is tight (Theorem~\ref{thm_cadlagPathTight}).

\begin{example}
In the case $G = \R^d$, the prototypical example of a scaling function is $1 \wedge |\cdot|^2$. For a general Lie group $G$, the example extends as follows: let $c > 0$ be sufficiently small such that $W := \{\exp(y) \mid y \in \g, |y| \leq c\}$ is contained in $U$.
Then
\begin{equation}\label{eq_largestScaleFunc}
\theta(x) := \1{x \in U}(c^2 \wedge |\xi(x)|^2) + c^2\1{x \notin U}
\end{equation}
is a scaling function.
\end{example}

\begin{remark}\label{remark:squaresScale}
Suppose that $\theta$ is defined by~\eqref{eq_largestScaleFunc} and that $X_{nj}$ is an iid array in $G$ such that the associated random walk converges in law to a L{\'e}vy process. Then a simple consequence of Theorem~\ref{thm_iidConvLevy} is that $\theta$ scales the array $X_{nj}$.
\end{remark}

\subsection{Approximating walk}\label{subsec:approxWalk}

In this subsection, given a L{\'e}vy process $\X$ in $G$, we construct an iid array $X_{nj}$ for which the associated random walk $\X^n$ converges in law to $\X$. The array $X_{nj}$ has the advantage that it takes values in either the support of the L{\'e}vy measure of $\X$, or in a set which shrinks to the identity as $n \rightarrow \infty$. This makes the walk $\X^n$ significantly easier to analyse than the increments of $\X$ itself and will be used in the proofs of Theorems~\ref{thm_LevyFinitepVar} and~\ref{thm_LKFormula}.

Throughout this subsection, let $\X$ be a L{\'e}vy process in $G$ with triplet $(A, B, \Pi)$. For $i\in \{1, \ldots m\}$ define
\[
\Gamma_i := \left\{ 0 \leq \gamma < 2 \midset \int_G |\xi_i(x)|^\gamma \Pi(dx) = \infty\right\}.
\]
Define also the sets of indexes
\[
J = \left\{j \in \{1, \ldots, m\} \midset A^{j,j} > 0\right\},
\]
\[
\widetilde K = \left\{k \in \{1, \ldots, m\} \midset 1 \notin \Gamma_k\right\}.
\]
For $k \in \widetilde K$ define
\[
\widetilde B^k = B^k - \int_{G} \xi_k(x) \Pi(dx),
\]
and let $K = \left\{ k \in \widetilde K \midset \widetilde B^k \neq 0\right\}$.

For $n$ sufficiently large so that $\Pi(U^c) < n/2$, let
\[
h_n = \inf\left\{h \geq 0 \midset \Pi\left(\{|\xi(x)| > h\} \cup U^c\right) \leq n/2\right\}.
\]
Define $U_n = \left\{x \in U \midset |\xi(x)| \leq h_n\right\}$ and note that $w_n := \Pi\{U_n^c\} \leq n/2$. Remark that $\lim_{n \rightarrow \infty} h_n = 0$ which implies that $U_n$ shrinks to $1_G$ as $n \rightarrow \infty$.

Define on $G$ the probability measure $\mu_n(dx) := w_n^{-1}\1{x \in U_n^c} \Pi(dx)$.
Observe that by H{\"o}lder's inequality, for all $q \geq 1$
\begin{equation}\label{eq_Holder}
\int_{U_n^c} |\xi_i(x)| \Pi(dx) \leq (n/2)^{1-1/q}\left(\int_{G}|\xi_i(x)|^q\Pi(dx)\right)^{1/q}.
\end{equation}

For every $n \geq 1$, let $Y_n = Y_n^1u_1 + \ldots Y_n^m u_m$ be a $\g$-valued random variable such that for all $k \in \widetilde K$
\[
b_n^k := \EEE{Y_n^k} = (1-w_n/n)^{-1}n^{-1} \widetilde B^k,
\]
and for all $k \notin \widetilde K$
\[
b_n^k := \EEE{Y_n^k} = (1-w_n/n)^{-1}n^{-1} \left(B^k - \int_{U_n^c}\xi_k(x)\Pi(dx)\right),
\]
and with covariances for all $i,j\in \{1,\ldots, m\}$
\[
\EEE{(Y_n^i - b_n^i)(Y_n^j-b_n^j)} = (1-w_n/n)^{-1}n^{-1}A^{i,j}.
\]
In particular, note that $Y_n^i = b_n^i$ a.s. for all $i \notin J$. Remark that setting $q = 2$ in~\eqref{eq_Holder} implies
\[
\lim_{n\rightarrow \infty} n^{-1} \int_{U_n^c} |\xi_i(x)| \Pi(dx) = 0,
\]
from which it follows that $\sup_{n \geq 1}n|b^i_n| < \infty$.
Moreover, it holds that
\[
\lim_{n \rightarrow \infty}\EEE{(Y_n^i - b_n^i)(Y_n^j-b_n^j)} = 0.
\]
It follows that we can choose $Y_n$ such that $\exp(Y_n)$ has support in a neighbourhood $V_n$ of $1_G$, such that $V_n$ shrinks to $1_G$ as $n \rightarrow \infty$. Denote by $\nu_n$ the probability measure of the $G$-valued random variable $\exp(Y_n)$.

Finally, let $X_{n1}$ be the $G$-valued random variable associated to the probability measure $(w_n/n) \mu_n + (1-w_n/n) \nu_n$, and let $X_{n2}, \ldots, X_{nn}$ be independent copies of $X_{n1}$.

Consider the random walk $\X^n$ associated with $X_{nj}$. Then a straightforward application of Theorem~\ref{thm_iidConvLevy} implies that $\X^n\convd \X$ as $D_o([0,1],G)$-valued random variables. We also record the following two simple lemmas whose proofs we omit.

\begin{lemma}\label{lem_thetaScales}
Let $0 < q_1, \ldots, q_m \leq 2$ be real numbers such that $q_i \notin \Gamma_i$ for all $i \in \{1,\ldots, m\}$, $q_i = 2$ for all $i \in J$, and $q_i \geq 1$ for all $i \in K$. Let $\theta$ be a scaling function such that $\theta(x) = \sum_{i = 1}^m |\xi_i(x)|^{q_i}$ for $x$ in a neighbourhood of $1_G$. Then $\theta$ scales the array $X_{n1},\ldots, X_{nn}$.
\end{lemma}

\begin{lemma}\label{lem_contOnSupport}
Let $\theta$ be a scaling function on $G$ which scales $X_{nj}$. Let $V$ be a neighbourhood of $1_G$, and let $f : \supp(\Pi) \cup V \mapsto \R$ be a bounded measurable function such that $f$ is continuous on $\supp(\Pi)$. Furthermore, suppose that
\[
\lim_{x \rightarrow 1_G} \frac{1}{\theta(x)} \norms{f - f(1_G) - \sum_{i=1}^m b_i\xi_i(x) - \frac{1}{2}\sum_{i,j=1}^m a_{i,j}\xi_i(x)\xi_j(x)} = 0.
\]

Then for all $n$ sufficiently large, $X_{n1} \in \supp(\Pi) \cup V$ a.s., and
\[
\lim_{n \rightarrow \infty} n\EEE{f(X_{n1}) - f(1_G)} = Q,
\]
where
\[
Q := \sum_{i=1}^m B^ib_i + \frac{1}{2}\sum_{i,j=1}^m A^{i,j}a_{i,j} + \int_{G} \left[ f(x) - f(1_G) - \sum_{i=1}^m b_i\xi_i(x) \right] \Pi(dx).
\]
\end{lemma}

\section{Homogeneous groups}
\label{sec:Hom}

In this section we collect several preliminary facts about homogeneous groups. For details, we refer to~\cite{FollandStein82} and~\cite{HebischSikora90}.

Throughout this section, we fix a homogeneous group $G$. That is, $G$ is a nilpotent, connected, and simply connected Lie group endowed with a one-parameter family of dilations (group automorphisms) $(\delta_\lambda)_{\lambda > 0}$, which, upon identifying $G$ with its Lie algebra $\g$ by the $\exp$ map, is given by
\[
\delta_\lambda(u_i) = \lambda^{d_i} u_i
\]
for a basis $u_1,\ldots, u_m$ of $\g$ and real numbers $d_m \geq \ldots \geq d_1 \geq 1$. We equip $G$ with a sub-additive homogeneous norm $\normc$ which induces a left-invariant metric $d(x,y) = \norm{x^{-1}y}$ (see~\cite{HebischSikora90}).

For the remainder of the section, we identify $G$ with $\g$ by the diffeomorphism $\exp : \g \mapsto G$, and write $x = \sum x^i u_i$ for $x \in G$.
Note that $\normt{x} = \sum_{i=1}^m |x^i|^{1/d_i}$ is also a homogeneous norm on $G$ and thus equivalent to $\normc$.

For a multi-index $\alpha = (\alpha^1,\ldots, \alpha^m)$, $\alpha^i \geq 0$, we define $\deg(\alpha) = \sum_{i=1}^m \alpha^i d_i$, and for $x \in G$, write $x^\alpha = (x^1)^{\alpha^1}\ldots(x^m)^{\alpha^m}$. By the Campbell--Baker--Hausdorff (CBH) formula, for all $i \in \{1,\ldots, m\}$ there exist constants $C^i_{\alpha,\beta}$ such that
\begin{equation}\label{eq:CBH}
(xy)^i = x^i + y^i + \sum_{\alpha,\beta} C^i_{\alpha,\beta} x^\alpha y^\beta,
\end{equation}
where the (finite) sum runs over all non-zero multi-indexes $\alpha,\beta$ such that $\deg(\alpha) + \deg(\beta) = d_i$.

\begin{example}\label{ex:gradedLie}
Recall that a Lie group $G$ is called graded if its Lie algebra is endowed with a decomposition
\begin{equation}\label{eq:LieAlgDecomp}
\g = \g^1 \oplus \ldots \oplus \g^N
\end{equation}
such that $[\g^i,\g^j] \subseteq \g^{i+j}$, where $\g^k = 0$ for $k > N$ (and where we allow the possibility that $\g^k = 0$ for some $k \leq N$). Every graded Lie group can be equipped with a natural family of dilations $(\delta_{\lambda})_{\lambda > 0}$, and thus a homogeneous structure, for which $d_1,\ldots, d_m$ are rational numbers with $d_1 = 1$, given by $\delta_\lambda(u) = \lambda^{k/\alpha}u$ for all $u \in \g^k$, where $\alpha = \min\{k \geq 1 \mid \g^k \neq 0\}$ (and conversely, if $d_1,\ldots, d_m$ are rational for a homogeneous group $G$, then $G$ can be given a graded structure~\cite[p. 5]{FollandStein82}).

Recall also that a graded Lie group $G$ is called a step-$N$ Carnot group (or stratified group in the terminology of~\cite{FollandStein82}) if the decomposition~\eqref{eq:LieAlgDecomp} further satisfies $[\g_i,\g_j] = \g_{i+j}$, where $\g_k = 0$ for $k > N$. Every Carnot group is a homogeneous group with a natural family of dilations given by $\delta_\lambda(u) = \lambda^k u$ for all $u \in \g_k$ (so that $d_i \in \{1,\ldots, N\}$), and for which the metric $d$ can be taken as the Carnot--Carath{\'e}odory distance~\cite[p. 38]{Baudoin04}.

The Carnot group which will be particularly relevant in Section~\ref{subsec:RPs} for applications in rough paths theory is the step-$N$ free nilpotent Lie group $G^N(\R^d)$ over $\R^d$, which we recall is, by definition, the space where geometric $p$-rough paths (for $\floor p = N$) take value. For further details concerning the theory of geometric rough paths, we refer to~\cite{FrizVictoir10}.
\end{example}

\begin{remark}\label{remark:branchedRPs}
Another homogeneous group which plays an important role in the theory of rough paths is the step-$N$ Butcher group $\GG^N(\R^d)$ over $\R^d$ (see~\cite{Gubinelli10, HairerKelly15}). Recall that $G^N(\R^d)$ is canonically embedded in $\GG^N(\R^d)$, and that $\GG^N(\R^d)$ admits a natural grading under which $\GG^N(\R^d)$ is not a Carnot group (see~\cite[Remark~2.15]{HairerKelly15}).

The group $\GG^N(\R^d)$ is, by definition, the space where branched rough paths take value (which form a genuine extension of the notion of geometric rough paths). 
We mention that branched rough paths were recently studied in~\cite{BCFP17} to give a rough path perspective on renormalisation of stochastic PDEs in the theory of regularity structures~\cite{BHZ16, Hairer14}. L{\'e}vy processes in $\GG^N(\R^d)$ in particular form a family of stationary stochastic processes closed under appropriate renormalisation maps (see~\cite[Section~4]{BCFP17}).
\end{remark}

\subsection{Paths of finite $p$-variation}

For $p > 0$ and functions $\x,\y : [s,t] \mapsto G$, define the $p$-variation distance
\[
d_{\pvar;[s,t]}(\x,\y) = d(\x_s,\y_s) + \sup_{\DD\subset [s,t]} \left( \sum_{t_j \in \DD} d(\x_{t_j,t_{j+1}}, \y_{t_j,t_{j+1}})^p \right)^{1/p},
\]
where the supremum runs over all partitions $\DD$ of $[s,t]$ and where we have used the shorthand notation $\x_{u,v} = \x_u^{-1} \x_v$. Define also the $p$-variation of $\x$ by $\norm{\x}_{\pvar;[s,t]} = d_{\pvar;[s,t]}(\x,1_G)$, and
\[
d_{0;[s,t]}(\x,\y) = d(\x_s,\y_s) + \sup_{u,v \in [s,t]} d(\x_{u,v},\y_{u,v})
\]
and
\[
d_{\infty;[s,t]}(\x,\y) = \sup_{u \in [s,t]} d(\x_u,\y_u), \; \; \norm{\x}_{\infty;[s,t]} = d_{\infty;[s,t]}(\x,1_G).
\]
We will drop the reference to the interval $[s,t]$ when it is clear from the context. For convenience, we record the following standard interpolation estimates.

\begin{lemma}\label{lem:interpolate}
\begin{enumerate}[label={(\arabic*)}]
\item \label{point:I1} For all $p' > p > 0$ and $\x,\y : [s,t] \mapsto G$
\[
d_{\pprimevar}(\x,\y) \leq 2^{\max\{0,(1-p)/p'\}}(\norm{\x}_{\pvar} + \norm{\y}_{\pvar})^{p/p'} d_{0}(\x,\y)^{1-p/p'}.
\]

\item \label{point:I2} There exists $C > 0$ such that for all $\x,\y : [s,t] \mapsto G$ with $\x_s = \y_s$
\[
d_{\infty}(\x,\y) \leq
d_{0}(\x,\y) \leq C \max\{d_{\infty}(\x,\y), d_{\infty}(\x,\y)^{1/d_m}(\norm{\x}_{\infty} + \norm{\y}_{\infty})^{1-1/d_m}\}.
\]
\end{enumerate}
\end{lemma}

\begin{proof}
\ref{point:I1} is obvious.
To show~\ref{point:I2}, it follows from an application of the CBH formula~\eqref{eq:CBH} and the equivalence of $\normc$ and $\normt{\cdot}$, that for all $g,h \in G$
\[
\norm{g^{-1}hg} \leq C_1 \max\{\norm{h}, \norm{h}^{1/d_m}\norm{g}^{1-1/d_m}\}.
\]
The conclusion now follows by the identical argument used to prove~\cite[Proposition~8.15]{FrizVictoir10}.
\end{proof}

For $p \geq 1$, let $C^{\pvar}([0,T],G)$ denote the space of continuous paths of finite $p$-variation equipped with the metric $d_{\pvar;[0,T]}$. Note that $C^{\pvar}([0,T],G)$ is a complete metric space due to the lower semi-continuity of $\x \mapsto \norm{\x}_{\pvar;[0,T]}$ (under pointwise convergence).

Note that, except in trivial cases, $C^{\pvar}([0,T],G)$ is non-separable. However, it is not difficult to show that $C^{\pprimevar}([0,T],G)$ contains a separable subset $C^{0,\pprimevar}([0,T],G)$ which contains $C^{\pvar}([0,T],G)$ for all $1 \leq p < p'$.
Indeed, let $C^g([0,T],G)$ denote the space of curves which are concatenations of one-parameter subgroups of $G$, i.e., all curves $\gamma : [0,T] \mapsto G$ of the form
\begin{equation}\label{eq:piecewise}
\gamma(t) = \gamma(t_{k-1}) \exp\left(\frac{t-t_{k-1}}{t_{k} - t_{k-1}} \log x_k \right), \; \; t \in [t_{k-1},t_{k}], \; \; k \in \{1, \ldots, n\},
\end{equation}
where $\DD = (t_0 = 0 < t_1 < \ldots < t_n = T)$ is a partition of $[0,T]$ and $x_1,\ldots, x_n \in G$ (and where for clarity we have broken the convention of identifying $G$ with $\g$). Then for $p \geq 1$, define $C^{0,\pvar}([0,T],G)$ as the closure of $C^{g}([0,T],G) \cap C^{\pvar}([0,T],G)$ in $C^{\pvar}([0,T],G)$.

\begin{remark}
In the case that $G$ is a Carnot group with decomposition~\eqref{eq:LieAlgDecomp}, $C^{0,\pvar}([0,T],G)$ is precisely the closure of the horizontal lifts of smooth paths $\gamma \in C^\infty([0,T], \g^1)$.
\end{remark}

To show the claimed properties of $C^{0,\pvar}([0,T],G)$, note that for $x \in G$, the path $\gamma : t \mapsto \exp(t\log x)$ has finite $p$-variation if and only if $x^i = 0$ for all $i \in \{1,\ldots, m\}$ such that $d_i > p$, in which case there exists $C_1 = C_1(p,G)>0$ such that $\norm{\gamma}_{\pvar;[0,1]} \leq C_1 \norm{x}$. For $\x : [0,T] \mapsto G$, and a partition $\DD \subset [0,T]$, let $\x^\DD \in C^g([0,T], G)$ be the interpolation of $\x$ along $\DD$ defined as $\gamma$ in~\eqref{eq:piecewise} with $x_k = \x^{-1}_{t_{k-1}}\x_{t_k}$ and $\x^\DD_0 = \x_0$. 
One can then readily show (e.g., by Lemma~\ref{lem:xpvarBound}) that $\sup_{\DD \subset [0,T]}\norm{\x^\DD}_{\pvar} \leq C_2\norm{\x}_{\pvar}$. Hence for all $\x \in C^{\pvar}([0,T],G)$ and $p' > p \geq 1$, by Lemma~\ref{lem:interpolate}, $d_{\pprimevar;[0,T]}(\x^\DD,\x) \rightarrow 0$ as $|\DD| \rightarrow 0$, which shows that $C^{\pvar}([0,T],G) \subseteq C^{0,\pprimevar}([0,T],G)$ as claimed. The fact that $C^{0,\pvar}([0,T],G)$ is separable (and thus Polish) is also easy to show (e.g., by considering $\gamma \in C^g([0,T],G)$ with rational coordinates and using a similar argument as the proof of Lemma~\ref{lem:xpvarBound}).

The following result will be important in our classification of $G$-valued L{\'e}vy processes of finite $p$-variation.

\begin{proposition}\label{prop:finiteVarLim}
Let $p > 0$ and $(\X_n)_{n \geq 1}$ be a sequence of $D([0,T], G)$-valued random variables such that $(\norm{\X_n}_{\pvar;[0,T]})_{n \geq 1}$ is a tight collection of real random variables. Suppose that $\X_n \convd \X$ as $D([0,T], G)$-valued random variables.

\begin{enumerate}[label={(\arabic*)}]
\item \label{point:finitepVar} It holds that $\norm{\X}_{\pvar;[0,T]} < \infty$ a.s..

\item \label{point:pVarConver} Suppose further that $p \geq 1$ and $\X^n, \X$ are $C([0,T], G)$-valued random variables. Then for all $p' > p$, $\X_n \convd \X$ as $C^{0,\pprimevar}([0,T], G)$-valued random variables.
\end{enumerate}
\end{proposition}

\begin{proof}
\ref{point:finitepVar} Note that $\x \mapsto \norm{\x}_{\pvar}$ is a lower semi-continuous function on $D$. Since $D([0,T], G)$ is Polish, we may apply the Skorokhod representation theorem~\cite[Theorem~3.30]{Kallenberg97}, from which the conclusion easily follows.

\ref{point:pVarConver} It follows from Lemma~\ref{lem:interpolate} that every set of the form $A \cap \{\x \in C([0,T],G) \mid \norm{\x}_{\pvar;[0,T]} < R\}$, where $R > 0$ and $A$ is a compact subset of $C([0,T],G)$ (for uniform topology), is a compact subset of $C^{0,\pprimevar}([0,T],G)$. Hence $(\X_n)_{n \geq 1}$ is a tight collection of $C^{0,\pprimevar}([0,T],G)$-valued r.v.'s, and so converges in law along a subsequence to some $C^{0,\pprimevar}([0,T],G)$-valued r.v. $\widetilde \X$.
Since $\X_n \convd \X$ as $C([0,T], G)$-valued r.v.'s, it necessarily follows that $\widetilde \X \eqd \X$, which concludes the proof.
\end{proof}

\begin{remark}
A version of Helly's selection principle (see~\cite[Theorem~2.4]{Porter05}) states that any uniformly bounded sequence of functions $\x^n : [0,T] \mapsto G$ for which $\sup_{n \geq 1} \norm{\x^n}_{\pvar;[0,T]} < \infty$ for some $p \geq 1$, has a subsequence such that $\x^{n_k} \rightarrow \x$ pointwise.
\end{remark}

\section{$p$-variation tightness of random walks}\label{sec:tightRandWalks}

We continue to use the notation of the previous section. Consider an iid array $X_{nk}$ in the homogeneous group $G$, and let $\X^n$ be the associated random walk. The main result of this section is Theorem~\ref{thm_cadlagPathTight}, which provides sufficient conditions under which $(\norm{\X^n}_{\pvar;[0,1]})_{n \geq 1}$ is tight. In its simplest form, Theorem~\ref{thm_cadlagPathTight} implies that whenever $\X^n$ converges in law to a L{\'e}vy process in $G$, and the array $X_{nk}$ is scaled by a scaling function $\theta$, then $(\norm{\X^n}_{\pvar;[0,1]})_{n \geq 1}$ is tight for all $p > \kappa > 0$, where $\kappa$ depends only on the scaling function $\theta$.

Let $\xi_1,\ldots,\xi_m \in C^\infty_c(G)$ and $\xi : G \mapsto \g$ be smooth functions and $U$ a neighbourhood of $1_G$ for which the conditions at the start of Section~\ref{sec:LieGroupIidArrays} are satisfied with respect to the basis $u_1,\ldots, u_m$.

\begin{theorem}\label{thm_cadlagPathTight}
Let $X_{n1}, \ldots, X_{nn}$ be an iid array of $G$-valued random variables and $\X^n$ the associated random walk. For every $i\in \{1,\ldots, m\}$, let $0 < q_i \leq 2$ be a real number, and define
\[
\kappa = \max\{q_1 d_1,\ldots,q_m d_m\}.
\]
Consider the following conditions:
\begin{enumerate}[label={(\Alph*)}]
\item\label{point_hTight} for every fixed $h \in [0,1]$, $(\X^n_h)_{n \geq 1}$ is a tight collection of $G$-valued random variables;

\item\label{point_nEXiBound} for all $i \in \{1,\ldots, m\}$, $\sup_{n \geq 1} n\norms{\EEE{\xi_i(X_{n1})}} < \infty$;

\item\label{point_thetaScale} the array $X_{nk}$ is scaled by a scaling function $\theta$, where $\theta \equiv \sum_{i=1}^m |\xi_i|^{q_i}$ on a neighbourhood of $1_G$.
\end{enumerate}

Then, provided~\ref{point_hTight},~\ref{point_nEXiBound} and~\ref{point_thetaScale} hold, $(\X^n)_{n \geq 1}$ is a tight collection of $D_o([0,1],G)$-valued random variables and, for every $p > \kappa$, $(\norm{\X^n}_{\pvar;[0,1]})_{n \geq 1}$ is a tight collection of real random variables.
\end{theorem}

\begin{remark}\label{remark:convLevyImpliesConds}
Suppose that for a L{\'e}vy process $\X$ in $G$, $\X^n \convd \X$ as $D_o([0,T], G)$-valued random variables. Then conditions~\ref{point_hTight} and~\ref{point_nEXiBound} are automatically satisfied by Theorem~\ref{thm_iidConvLevy} (and~\ref{point_thetaScale} is satisfied upon choosing $q_i = 2$ for all $i\in \{1,\ldots, m\}$ by Remark~\ref{remark:squaresScale}). 
\end{remark}

The remainder of the section is devoted to the proof of Theorem~\ref{thm_cadlagPathTight}, which can be split into three parts. The first part is collected in Section~\ref{subsec:pVarTight} and comprises a general $p$-variation tightness criterion for strong Markov processes.
The second part, which is the most technical part of the proof, is collected in Section~\ref{subsec:pMore} and establishes the bounds required to apply the results of Section~\ref{subsec:pVarTight} for the case $p > d_m$.
The third part is collected in Section~\ref{subsec:pLess} and treats the case $p \leq d_m$. Roughly speaking, in the third part we decompose $\X^n$ into the lift of a walk in a lower level group, for which the previous two parts apply,
and a perturbation on the higher levels, for which the $p$-variation can be controlled directly.

\subsection{$p$-variation tightness of strong Markov processes}\label{subsec:pVarTight}

In this section we give a criterion for $p$-variation tightness of strong Markov processes in a Polish space (Theorem~\ref{thm:pVarTightCrit}), which is inspired by the work of Manstavi{\v c}ius~\cite{Manstavicius04}.

Let $(E,d)$ be a metric space and $\x : [0,T] \mapsto E$ a function. Define
\[
M(\x) := \sup_{s,t \in [0,T]} d(\x_t, \x_s),
\]
and, for $\delta > 0$,
\[
\nu_\delta(\x) := \sup\left\{k \geq 0 \midset \exists (t_i)_{i=1}^{2k}, t_1 < t_2 \leq t_3 < \ldots < t_{2k}, d(\x_{t_{2i}}, \x_{t_{2i-1}}) > \delta, i \leq k\right\}.
\]
Note that quantity $\nu_\delta(\x)$ measures the maximum number of oscillations of $\x$ of magnitude greater than $\delta$ over non-overlapping intervals. Observe the following basic inequality which serves to control $\norm{\x}_{\pvar;[0,T]}$:
\begin{equation}\label{eq:pVarBoundNuM}
\norm{\x}^p_{\pvar;[0,T]} \leq \sum_{r = 1}^\infty 2^{-rp+p}\nu_{2^{-r}}(\x) + M(\x)^p\nu_{1}(\x).
\end{equation}

For $\delta > 0$, define the increasing sequence of times $(\tau^\delta_j(\x))_{j=0}^\infty$ by $\tau^\delta_0(\x) = 0$ and for $j \geq 1$
\[
\tau^\delta_j(\x) =
\begin{cases}
\inf \left\{t \in [\tau^\delta_{j-1}(\x), T] \midset \sup_{u,v \in [\tau^\delta_{j-1}(\x),t]} d(\x_u,\x_v) > \delta\right\} \\ 
\infty \; \; \; \mbox{ if the above set is empty}.
\end{cases}
\]

\begin{lemma}\label{lem:negBinomBound}
Let $\X$ be a $D([0,T],E)$-valued random variable for a Polish space $(E,d)$. Let $\delta,h > 0$ such that there exists $q \in (0,1)$ for which a.s. for all $i \geq 0$
\begin{equation}\label{eq:successiveTau}
\PPP{\tau^\delta_{i+1}(\X) - \tau^\delta_i(\X)  \leq h | \tau^\delta_{i}(\X),\ldots, \tau^\delta_{0}(\X)} \leq q
\end{equation}
(where we use the convention $\infty - \infty = \infty$). Then
\[
\EEE{\nu_\delta(\X)} \leq \roof{T/h}\frac{1}{1-q}.
\]
\end{lemma}

\begin{proof}
Note that for any function $\x : [0,T] \mapsto E$, it holds that $\nu_{\delta}(\x)$ is the largest integer $j$ for which $\tau^\delta_j(\x) \leq T$, and thus
\begin{equation*}
\PPP{\nu_{\delta}(\X) \geq j} = \PPP{\tau^{\delta}_j(\X) \leq T}.
\end{equation*}
For $i \geq 0$, consider the event $A_i = \{\tau_{i+1}^{\delta}(\X) - \tau_{i}^\delta(\X) > h\}$, and note that
\[
\PPP{\tau_j^\delta(\X) \leq T} \leq \PPP{\textnormal{at most $\roof{T/h}$ of $(A_i)_{i=0}^{j-1}$ occur}}.
\]
Consider a real random variable $Z$ distributed by the negative binomial distribution with parameters $(\roof{T/h},q)$, i.e., $Z$ counts the total number of iid Bernoulli trials with success probability $q$ until exactly $\roof{T/h}$ failures occur. It follows from the uniform bound~\eqref{eq:successiveTau} that
\[
\PPP{\textnormal{at most $\roof{T/h}$ of $(A_i)_{i=0}^{j-1}$ occur}} \leq \PPP{Z \geq j}
\]
(where one considers $A_i$ as a failure with probability at least $1-q$), so that
\[
\EEE{\nu_\delta(\X)} \leq \EEE{Z} = \roof{T/h}\frac{1}{1-q}.
\]
\end{proof}

We now show how one can verify the condition of Lemma~\ref{lem:negBinomBound} for a strong Markov process. We first restrict attention to the set of times on which a process is allowed to move.

\begin{definition}
For a metric space $(E,d)$ and a $D([0,T], E)$-valued random variable $\X$, call a (deterministic) open interval $(s,t) \subset [0,T]$ \emph{stationary} if
\[
\PPP{\forall u \in (s,t), \X_u = \X_s} = 1.
\]
Let $Z_\X\subseteq [0,T]$ denote the union of all stationary intervals, and let $R_{\X} = [0,T]\setminus Z_\X$ be its complement.
\end{definition}

\begin{example}
For the random walk $\X^n \in D([0,1], G)$ associated with an iid array $X_{nj}$ in a Lie group $G$, we have $R_{\X^n} = \{0,1/n,\ldots, (n-1)/n, 1\}$.
\end{example}

We emphasise that the role of $R_\X$ is only technical in that it allows us to easily formulate bounds uniform in $s \in R_\X$ (such as those in Theorem~\ref{thm:pVarTightCrit} and Corollary~\ref{cor:pVarTightCrit}) which hold for random walks and for which the same bounds would not hold when taken uniformly over all $s \in [0,T]$ (though for completely harmless reasons). The following lemma is a variant of G{\={\i}}hman--Skorokhod~\cite[Lemma~2, p. 420]{GihmanSkorohod74} (in which the notion of $R_\X$ does not appear).

\begin{lemma}[Maximum inequality]\label{lem:maximumIneq}
Let $\X$ be a c{\`a}dl{\`a}g (not necessarily strong) Markov process taking values in a Polish space $(E,d)$.

Let $h,\delta > 0$ and suppose there exists $c \in [0,1)$ such that
\begin{equation}\label{eq:alphaBound}
\sup_{s \in R_\X} \sup_{x \in E} \sup_{t \in [s,s+h]}\PPPover{s,x}{d(\X_s,\X_t) > \delta} \leq c.
\end{equation}

Then for all $s \in R_\X$ and $x \in E$, it holds that
\[
\PPPover{s,x}{\sup_{t \in [s,s+h]} d(\X_s,\X_t) > 2\delta} \leq \frac{\PPPover{s,x}{d(\X_s,\X_{s+h}) > \delta}}{1-c}.
\]
\end{lemma}

\begin{proof}
Let $s \in R_\X$ and observe that a.s.
\[
\sup_{t \in [s,s+h]} d(\X_s,\X_t) = \sup_{t \in [s,s+h] \cap R_\X} d(\X_s,\X_t).
\]

Consider a nested sequence of partitions $\DD_n \subset [s,s+h] \cap R_\X$ such that
\[
\lim_{n \rightarrow\infty}\sup_{t \in [s,s+h]\cap R_\X} d_+(t,\DD_n) = 0,
\]
where $d_+(t, \DD_n) := t - \sup\left\{u \leq t\midset u\in \DD_n\right\}$.
Since $\X$ is c{\`a}dl{\`a}g, it holds that
\[
\sup_{t \in [s,s+h] \cap R_\X} d(\X_s,\X_t) = \lim_{n \rightarrow \infty} \sup_{t_i \in \DD_n} d(\X_s,\X_{t_i}),
\]
where the right side is non-decreasing in $n$ since $\DD_n$ are nested.

It thus suffices to show that for any partition $\DD = (t_0=s,\ldots, t_n) \subset [s,s+h]\cap R_\X$, we have
\begin{equation}\label{eq:maxOverPart}
\PPPover{s,x}{\sup_{t_i \in \DD} d(\X_s,\X_{t_i}) > 2\delta} \leq \frac{\PPPover{s,x}{d(\X_s,\X_{s+h}) > \delta}}{1-c}.
\end{equation}

To this end, for $i \in \{0,\ldots, n\}$, consider the events
\[
C_i := \{d(\X_{t_i},\X_{s+h}) > \delta\}
\]
and
\[
B_i := \{d(\X_s,\X_{t_i}) > 2\delta\}.
\]
Define the $\sigma$-algebras $\FF_{s,t} := \sigma(\X_u)_{s\leq u \leq t}$. Observe that~\eqref{eq:alphaBound} implies that a.s.
\[
\EEEover{s,x}{ \1{C_i} \midset \FF_{s,t_i}} \leq c.
\]
Moreover, consider the disjoint events $F_i := B^c_1 \cap \ldots \cap B^c_{i-1} \cap B_{i}$.
%, the event that the first crossover of level $2\delta$ on the partition $\DD$ is at point $t_i$.
Then for all $i \in \{0,\ldots, n\}$
\[
F_i \cap C^c_{i} \subseteq C_0.
\]
Since each $F_i\cap C^c_i$ is disjoint and $F_i$ is $\FF_{s,t_i}$-measurable, we have
\begin{align*}
\PPPover{s,x}{C_0}
&\geq \sum_{i=1}^n \PPPover{s,x}{F_i \cap C^c_{i}} \\
&\geq \sum_{i=1}^n \EEEover{s,x}{\1{F_i}\EEEover{s,x}{ \1{C^c_i} \midset \FF_{s,t_i}}} \\
&\geq (1-c)\sum_{i=1}^n \PPPover{s,x}{F_i}.
\end{align*}
Finally,~\eqref{eq:maxOverPart} now follows from the fact that
\[
\sum_{i=1}^n \PPPover{s,x}{F_i} = \PPPover{s,x}{\sup_{t_i \in \DD} d(\X_s,\X_{t_i}) > 2\delta}.
\]
\end{proof}

\begin{corollary}\label{cor:tauGapBound}
Let $\X$ be a c{\`a}dl{\`a}g strong Markov process taking values in a Polish space $(E,d)$. Let $h,\delta > 0$ and $c \in [0,1)$ satisfy~\eqref{eq:alphaBound}. Then for all $i \geq 0$, a.s.
\[
\PPP{\tau^{4\delta}_{i+1}(\X) - \tau^{4\delta}_{i}(\X) \leq h | \tau^{4\delta}_i(\X),\ldots,\tau^{4\delta}_0(\X)} \leq \frac{c}{1-c}.
\]
\end{corollary}

\begin{proof}
Observe that $\tau^{4\delta}_i(\X)$ takes values a.s. in $R_\X$, and that the event $\{\tau_{i+1}^{4\delta}(\X) - \tau_i^{4\delta}(\X) \leq h\}$ is contained inside $\{\sup_{t \in [\tau_i,\tau_i + h]} d(\X_{\tau_i},\X_{t}) > 2\delta\}$. Conditioning on the stopping times $\{\tau^{4\delta}_i(\X),\ldots,\tau^{4\delta}_0(\X)\}$ and using the assumption that $\X$ is a strong Markov process, the desired result now follows from Lemma~\ref{lem:maximumIneq}.
\end{proof}

We now obtain the following $p$-variation tightness criterion for strong Markov processes. Recall the quantity $M(\X) = \sup_{s,t \in [0,T]} d(\X_t, \X_s)$.

\begin{theorem}\label{thm:pVarTightCrit}
Let $\MM$ be collection of c{\`a}dl{\`a}g strong Markov processes on $[0,T]$ taking values in a Polish space $(E,d)$.
Suppose that
\begin{enumerate}[label={(\alph*)}]
\item\label{point:MnuTight} $(M(\X))_{\X \in \MM}$ is tight, and

\item\label{point:supsBound} there exist constants $a, \kappa, b > 0$ and $c \in [0,1/2)$ such that for all $\delta \in (0, b]$
\[
\sup_{\X \in \MM} \sup_{s \in R_{\X}} \sup_{x \in E} \sup_{t \in [s,s+h(\delta)]} \PPPover{s,x}{d(\X_s,\X_{t}) > \delta} \leq c,
\]
where $h(\delta) := a\delta^\kappa$.
\end{enumerate}
Then for any $p > \kappa$, $(\norm{\X}_{\pvar;[0,T]})_{\X\in \MM}$ is a tight collection of real random variables.
\end{theorem}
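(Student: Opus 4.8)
The plan is to feed the elementary decomposition \eqref{eq:pVarBoundNuM} into the maximal estimate of Corollary~\ref{cor:tauGapBound} and the negative-binomial bound of Lemma~\ref{lem:negBinomBound}. Fix $p>\kappa$. By \eqref{eq:pVarBoundNuM},
\[
\norm{\X}^p_{\pvar;[0,T]} \le \sum_{r=1}^\infty 2^{-rp+p}\,\nu_{2^{-r}}(\X) + M(\X)^p\,\nu_1(\X),
\]
so it suffices to control the right-hand side uniformly over $\X\in\MM$. The crucial point will be a uniform bound $\EEE{\nu_{2^{-r}}(\X)}\lesssim 1+2^{r\kappa}$, which is summable against $2^{-rp}$ precisely because $p>\kappa$; the term containing $M(\X)$ is then dealt with separately using only the tightness hypothesis~\ref{point:MnuTight}.

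For the main estimate, fix a positive integer $r_0$ so large that $2^{-r-2}\le b$ for all $r\ge r_0$, and let $r\ge r_0$. Apply Corollary~\ref{cor:tauGapBound} with its parameter $\delta$ taken to be $2^{-r-2}$ and with $h=h(2^{-r-2})=a\,2^{-(r+2)\kappa}$: hypothesis~\ref{point:supsBound} guarantees, uniformly over $\X\in\MM$, that \eqref{eq:alphaBound} holds for this choice of $\delta$ and $h$ with the fixed constant $c<1/2$. The corollary then gives, a.s. for all $i\ge0$,
\[
\PPP{\tau^{2^{-r}}_{i+1}(\X)-\tau^{2^{-r}}_i(\X)\le h \,\big|\, \tau^{2^{-r}}_i(\X),\ldots,\tau^{2^{-r}}_0(\X)} \le \frac{c}{1-c}=:q,
\]
and $q<1$ since $c<1/2$. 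Lemma~\ref{lem:negBinomBound} (with $\delta$ there equal to $2^{-r}$) now yields the $\X$-uniform bound
\[
\EEE{\nu_{2^{-r}}(\X)} \le \roof{\frac{T}{a\,2^{-(r+2)\kappa}}}\frac{1-c}{1-2c} \le \Bigl(1+\frac{T\,4^{\kappa}}{a}\,2^{r\kappa}\Bigr)\frac{1-c}{1-2c}.
\]
For the finitely many indices $1\le r<r_0$, and for the scale $\delta=1$ appearing in the last term of \eqref{eq:pVarBoundNuM}, I use that $\delta\mapsto\nu_\delta(\x)$ is non-increasing (immediate from the definition of $\nu_\delta$), so $\nu_{2^{-r}}(\X)\le\nu_{2^{-r_0}}(\X)$ and $\nu_1(\X)\le\nu_{2^{-r_0}}(\X)$, and the displayed bound with $r=r_0$ controls $\EEE{\nu_{2^{-r_0}}(\X)}$ by a constant depending only on $T,a,\kappa,c,r_0$.

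Summing over $r$ and using $p>\kappa$ (so that both $\sum_{r\ge r_0}2^{-rp}$ and $\sum_{r\ge r_0}2^{-r(p-\kappa)}$ converge) produces a finite constant $C_1$, independent of $\X\in\MM$, with
\[
\EEE{\sum_{r=1}^\infty 2^{-rp+p}\,\nu_{2^{-r}}(\X)} \le C_1 \qquad\text{and}\qquad \EEE{\nu_1(\X)}\le C_1 .
\]
Now let $\varepsilon>0$. By~\ref{point:MnuTight} there is $L>0$ with $\PPP{M(\X)>L}<\varepsilon/2$ for all $\X\in\MM$. On the event $\{M(\X)\le L\}$, \eqref{eq:pVarBoundNuM} gives $\norm{\X}^p_{\pvar;[0,T]}\le\sum_{r}2^{-rp+p}\nu_{2^{-r}}(\X)+L^p\nu_1(\X)$, hence
\[
\EEE{\norm{\X}^p_{\pvar;[0,T]}\,\1{M(\X)\le L}} \le C_1+L^pC_1 =: C_L<\infty
\]
uniformly over $\X\in\MM$. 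By Markov's inequality $\PPP{\norm{\X}_{\pvar;[0,T]}>K,\ M(\X)\le L}\le C_L/K^p$, so choosing $K$ with $C_L/K^p<\varepsilon/2$ gives $\PPP{\norm{\X}_{\pvar;[0,T]}>K}<\varepsilon$ for all $\X\in\MM$, which is the asserted tightness.

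The part requiring the most care is the bookkeeping in the second step: hypothesis~\ref{point:supsBound} is phrased in terms of oscillation exceeding $\delta$ over a window of length $h(\delta)=a\delta^\kappa$, whereas Corollary~\ref{cor:tauGapBound} controls the stopping times $\tau^{4\delta}$, so one must track the factor-$4$ dilation and the resulting $2^{(r+2)\kappa}$ growth and verify it remains summable against $2^{-rp}$ — exactly where $p>\kappa$ is used. A secondary, but essential, point is that only tightness of $M(\X)$ (not a moment bound) is assumed, which forces the localisation to $\{M(\X)\le L\}$ before invoking Markov's inequality instead of bounding $\EEE{\norm{\X}^p_{\pvar;[0,T]}}$ outright.
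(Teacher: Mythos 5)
Your proof is correct and follows essentially the same route as the paper: you combine the decomposition~\eqref{eq:pVarBoundNuM} with Corollary~\ref{cor:tauGapBound} and Lemma~\ref{lem:negBinomBound} to obtain the uniform bound $\EEE{\nu_{\delta}(\X)}\lesssim 1+\delta^{-\kappa}$, sum the series using $p>\kappa$, and dispose of the $M(\X)^p\nu_1(\X)$ term via hypothesis~\ref{point:MnuTight}. Your extra bookkeeping (the factor-$4$ dilation, the finitely many scales with $2^{-r}>4b$ handled by monotonicity of $\nu_\delta$, and the localisation to $\{M(\X)\le L\}$ before Markov) simply spells out steps the paper leaves as ``readily follows''.
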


\begin{proof}
Let $p > \kappa$. We claim that it suffices to show
\begin{equation}\label{eq:nuSeriesConv}
\sup_{\X \in \MM} \sum_{r = 0}^\infty 2^{-rp}\EEE{\nu_{2^{-r}}(\X)} < \infty.
\end{equation}
Indeed, observe that~\eqref{eq:nuSeriesConv} implies that $(\nu_{1}(\X))_{\X\in\MM}$ is tight. It then follows, by~\ref{point:MnuTight} and the estimate~\eqref{eq:pVarBoundNuM}, that~\eqref{eq:nuSeriesConv}  implies $(\norm{\X}_{\pvar;[0,T]})_{\X\in \MM}$ is tight as claimed.

It thus remains to show~\eqref{eq:nuSeriesConv}. By~\ref{point:supsBound} and Corollary~\ref{cor:tauGapBound}, it holds that for all $\delta \in (0,b]$
\[
\sup_{\X \in \MM}\PPP{\tau_{i+1}^{4\delta}(\X) - \tau_i^{4\delta}(\X)  \leq h(\delta)\midset \tau_i^{4\delta}(\X),\ldots, \tau^{4\delta}_0(\X)} \leq \frac{c}{1-c}.
\]

Hence, by Lemma~\ref{lem:negBinomBound}, for all $\delta \in (0,b]$
\[
\sup_{\X \in \MM}\EEE{\nu_{4\delta}(\X)} \leq \roof{T/h(\delta)}\frac{1}{1-\frac{c}{1-c}} \leq (1 + T \delta^{-\kappa}/a)\frac{1-c}{1-2c},
\]
from which~\eqref{eq:nuSeriesConv} readily follows.
\end{proof}

\begin{corollary}\label{cor:pVarTightCrit}
Let $(\X^n)_{n \geq 1}$ be a sequence of c{\`a}dl{\`a}g strong Markov processes on $[0,T]$ taking values in a Polish space $(E,d)$. Suppose that

\begin{enumerate}[label={(\roman*)}]
\item\label{point:coordsTight} for every fixed rational $h \in [0,T]$, $(\X_h^n)_{n \geq 1}$ is a tight collection of $E$-valued random variables, and
\item\label{point:betaGammaBound} there exist constants $K, \beta, \gamma, b > 0$ such that for all $\delta \in (0, b]$ and $h > 0$
\[
\sup_{n \geq 1} \sup_{s \in R_{\X^n}} \sup_{x \in E} \sup_{t \in [s,s+h]} \PPPover{s,x}{d(\X^n_s,\X^n_t) > \delta} \leq K\frac{h^\beta}{\delta^\gamma}.
\]
\end{enumerate}
Then $(\X^n)_{n \geq 1}$ is a tight collection of $D([0,T],E)$-valued random variables, and for any $p > \gamma/\beta$, $(\norm{\X^n}_{\pvar;[0,T]})_{n \geq 1}$ is a tight collection of real random variables.
\end{corollary}

\begin{proof}
First, note that~\ref{point:betaGammaBound} applied to small $h$ allows us to verify the Aldous condition for the sequence $(\X^n)_{n \geq 1}$ (see, e.g,~\cite[p. 188]{Kolokoltsov11}, though note one should restrict attention to sequences of stopping times $\tau_n$ taking values in $R_{\X^n}$ a.s., which is a trivial modification to the usual Aldous condition). Together with~\ref{point:coordsTight}, it follows that $(\X^n)_{n \geq 1}$ is a tight collection of $D([0,T],E)$-valued random variables (\cite[Theorems~4.8.1,~4.8.2]{Kolokoltsov11}).

Observe that $M$ is a continuous function on $D([0,T],E)$, from which it follows that $(M(\X^n))_{n \geq 1}$ is tight. Moreover, observe that~\ref{point:betaGammaBound} implies that there exists $a > 0$ such that for all $\delta \in (0,b]$
\[
\sup_{n \geq 1} \sup_{s \in R_{\X^n}} \sup_{x \in E} \sup_{t \in [s,s+h]} \PPPover{s,x}{d(\X^n_s,\X^n_t) > \delta} \leq \frac{1}{3},
\]
where $h = a\delta^{\gamma/\beta}$. It follows that the conditions of Theorem~\ref{thm:pVarTightCrit} are satisfied with $\kappa = \gamma/\beta$, so that indeed $(\norm{\X^n}_{\pvar;[0,T]})_{n \geq 1}$ is tight for all $p > \gamma/\beta$.
\end{proof}

\subsection{Proof of Theorem~\ref{thm_cadlagPathTight} in the case $p > d_m$} \label{subsec:pMore}

We continue using the notation of Section~\ref{sec:Hom}. In particular, we identify $G$ with $\g$ via the $\exp$ map.

\begin{remark}\label{remark:pBiggerThanN}
We note here that Corollary~\ref{cor:pVarTightCrit} and the bound~\eqref{eq_XnUnifBound} in the upcoming Lemma~\ref{lem:boundsForTight} are sufficient to establish that conditions~\ref{point_hTight}, \ref{point_nEXiBound} and \ref{point_thetaScale} imply that $(\X^n)_{n \geq 1}$ is a tight collection of $D_o([0,1],G)$-valued random variables and that $(\norm{\X^n}_{\pvar;[0,1]})_{n \geq 1}$ is tight for all $p > \kappa \vee d_m$, which proves the statement of Theorem~\ref{thm_cadlagPathTight} subject to the restriction $p > d_m$.
%More precisely, Corollary~\ref{cor:pVarTightCrit} and~\eqref{eq_XnUnifBound} show that conditions~\ref{point_hTight}, \ref{point_nEXiBound} and \ref{point_thetaScale} indeed imply that 
\end{remark}

Observe that an inductive application of the CBH formula~\eqref{eq:CBH}, along with the multinomial identity $(z_1+\ldots + z_n)^{j} = \sum_{k_1+\ldots + k_n = j} \binom{j}{k_1,\ldots,k_n} z_1^{k_1}\ldots z_n^{k_n}$, yields the following lemma.

\begin{lemma}\label{lem:CBH}
For all $x_1,\ldots, x_k \in G$ and every index $i \in \{1,\ldots, m\}$, it holds that
\[
(x_1\ldots x_k)^i = \sum_{1\leq a_1 \leq k} x^i_{a_1} + \sum_{r=2}^{\floor{d_i}}\sum_{\alpha_1,\ldots, \alpha_r}\sum_{1 \leq a_1 < \ldots < a_r \leq k} c^i_{\alpha_1,\ldots,\alpha_r} x^{\alpha_1}_{a_1}\ldots x^{\alpha_r}_{a_r},
\]
where $\sum_{\alpha_1,\ldots, \alpha_r}$ indicates the (finite) sum over all non-zero multi-indexes $\alpha_1,\ldots, \alpha_r$ such that $\deg(\alpha_1) + \ldots + \deg(\alpha_r) = d_i$ and $c^i_{\alpha_1,\ldots,\alpha_r}$ are constants.
\end{lemma}

Recall that $\X^n$ denotes the random walk associated to the iid array $X_{n1},\ldots, X_{nn}$.

\begin{lemma}\label{lem:boundsForTight}
Use the notation from Theorem~\ref{thm_cadlagPathTight} and suppose that~\ref{point_nEXiBound} and~\ref{point_thetaScale} hold. Let $\gamma := d_m \vee \kappa$, and for $i \in \{1,\ldots, m\}$ denote by $\Y^{n,i} \in D_o([0,1],\R)$ the random walk associated with the $\R$-valued iid array $X^i_{nk}$.

Then there exists $K > 0$ such that for all $n \geq 1$, $k \in \{1,\ldots, n\}$ and $\delta \in (0,1]$
\begin{equation}\label{eq_XnUnifBound}
\PPP{\norm{\X^n_{k/n}} > \delta} \leq K\frac{k/n}{\delta^\gamma},
\end{equation}
and, for all $i \in \{1,\ldots, m\}$ such that $q_i \leq 1$,
\begin{equation}\label{eq_YnUnifBound}
\PPP{\norms{\Y^{n,i}_{k/n}} > \delta} \leq K\frac{k/n}{\delta^{q_i}}.
\end{equation}
\end{lemma}

\begin{proof}
We first claim that it suffices to consider the case $\norm{X_{n1}} \leq \varepsilon$ a.s. for all $n \geq 1$, where $\varepsilon > 0$ may be taken arbitrarily small. Indeed, let $\varepsilon > 0$ and note that there exists $c > 0$ such that $\theta(x) > c$ for all $x \in G$ with $\norm{x} > \varepsilon$. Since $\theta$ scales $X_{nk}$, it follows that there exists $C_1 > 0$ such that for all $n \geq 1$
\[
\PPP{\norm{X_{n1}} \geq \varepsilon} \leq c^{-1}\EEE{\theta(X_{n1})} \leq C_1/n,
\]
and hence
\[
\PPP{\max_{1 \leq a \leq k}\norm{X_{na}} \geq \varepsilon} = 1 - \left(1-\PPP{\norm{X_{n1}} \geq \varepsilon}\right)^k \leq C_1k/n.
\]
It follows that for all $n \geq 1$ and $k \in \{1,\ldots, n\}$
\[
\PPP{\norm{\X^n_{k/n}} > \delta} \leq \PPP{\norm{\X^n_{k/n}} > \delta, \max_{1 \leq a \leq k}\norm{X_{na}} < \varepsilon} + C_1k/n,
\]
and similarly for $\PPP{\norms{\Y^{n,i}_{k/n}} > \delta}$.
Replacing $X_{nk}$ by
\[
X_{nk}' =
\begin{cases} X_{nk} &\mbox{if } \norm{X_{nk}} < \varepsilon \\ 
1_G &\mbox{otherwise},
\end{cases}
\]
we note that~\ref{point_nEXiBound} and~\ref{point_thetaScale} imply that the same conditions hold for the iid array $X_{nk}'$. It thus suffices to prove the statement of the lemma for the iid array $X_{nk}'$ instead as claimed.

We henceforth assume that $\norm{X_{n1}} < \varepsilon$ a.s., where $\varepsilon > 0$ is sufficiently small so that $x \in U$ whenever $\norm{x} < \varepsilon$.
We first show~\eqref{eq_YnUnifBound}. Let $i \in \{1,\ldots, m\}$ such that $q_i \leq 1$. Then there exists $C_2 > 0$ such that
\[
\EEE{\norms{\Y^{n,i}_{k/n}}^{q_i}}  = \EEE{\norms{\sum_{a=1}^k X^i_{na}}^{q_i}} \leq \EEE{\sum_{a=1}^k |X^i_{na}|^{q_i}} \leq C_2k/n.
\]
where the second inequality is due to~\ref{point_thetaScale}. It follows by Markov's inequality that there exists $K > 0$ such that \eqref{eq_YnUnifBound} holds for all $n \geq 1$, $k \in \{1,\ldots, n\}$, and $\delta \in (0,1]$.

We now show~\eqref{eq_XnUnifBound}. By Lemma~\ref{lem:CBH}, it suffices to show that for all $i \in \{1,\ldots m\}$, $r \in \{1,\ldots, \floor{d_i}\}$,  multi-indexes $\alpha_1,\ldots, \alpha_r$ such that $\deg(\alpha_1) + \ldots +\deg(\alpha_r) = d_i$ (with $\alpha_1^i = 1$ in the case that $r=1$), there exists $K > 0$ such that for all $n \geq 1$, $k \in\{1,\ldots,n\}$ and $\delta \in (0,1]$
\begin{equation}\label{eq_expandedWords}
\PPP{\norms{\sum_{1 \leq a_1 < \ldots < a_r \leq k}X^{\alpha_1}_{na_1}\ldots X^{\alpha_r}_{na_r}}^{1/d_i} > \delta} \leq K\frac{k/n}{\delta^\gamma}.
\end{equation}

To this end, let us fix $i \in \{1,\ldots, m\}$, $r \in \{1,\ldots, \floor{d_i}\}$, and multi-indexes $\alpha_1,\ldots, \alpha_r$ such that $\deg(\alpha_1)+\ldots +\deg(\alpha_r) = d_i$. Consider first the case $r \geq 2$. Define
\[
\gamma_i := d_i \vee \max\left\{q_j d_j \midset j \in \{1,\ldots, m\}, d_j \leq d_i \right\}.
\]
By Markov's and Jensen's inequalities (observing that $\gamma_i \leq 2d_i$)
\begin{multline}\label{eq_boundrBig}
\PPP{\norms{\sum_{1 \leq a_1 < \ldots < a_r \leq k}X^{\alpha_1}_{na_1}\ldots X^{\alpha_r}_{na_r}}^{1/d_i} > \delta} \\
\leq \delta^{-\gamma_j} \EEE{\left(\sum_{1 \leq a_1 < \ldots < a_r \leq k} X^{\alpha_1}_{na_1}\ldots X^{\alpha_r}_{na_r}\right)^2}^{\gamma_i/2d_i}.
\end{multline}

To bound the last expression, for a multi-index $\alpha = (\alpha^1,\ldots,\alpha^m)$, denote $|\alpha| = \alpha^1 + \ldots + \alpha^m$. Note that due to the assumption $\norm{X_{n1}} < \varepsilon$ a.s.,~\ref{point_nEXiBound} is equivalent to
\begin{equation}\label{eq_nEWBound}
\sup_{|\alpha| = 1} \sup_{n \geq 1} n\norms{\EEE{X_{n1}^\alpha}} < \infty.
\end{equation}
Furthermore, by~\ref{point_thetaScale} and the Cauchy--Schwartz inequality,
\begin{equation}\label{eq_nAlphaBound}
\sup_{|\alpha| \geq 2}\sup_{n \geq 1}n\EEE{\norms{X^{\alpha}_{n1}}} < \infty.
\end{equation}

Consider now the expression
\begin{equation}\label{eq_squaresMons}
\EEE{\left(\sum_{1 \leq a_1 < \ldots < a_r \leq k} X^{\alpha_1}_{na_1}\ldots X^{\alpha_r}_{na_r}\right)^2}.
\end{equation}
Since $X_{n1},\ldots, X_{nn}$ are independent,~\eqref{eq_squaresMons} splits into a sum of terms of the form $\EEE{X_{n1}^{\beta_1}}\ldots \EEE{X_{nk}^{\beta_k}}$ with $\beta_i \geq 0$. Call the \emph{simple degree} of such a term the number of $\beta_i > 0$. The minimum simple degree of any term is evidently $r$ and the maximum is $2r$, and one readily sees that there exists $C_3 > 0$ such that for all $n \geq 1$ and $k \in \{ 1,\ldots, n\}$, the number of terms of simple degree $s \in \{r,\ldots, 2r\}$ is bounded above by $C_3 k^s$.
Furthermore, since $X_{n1},\ldots, X_{nn}$ are identically distributed, it follows from~\eqref{eq_nEWBound} and~\eqref{eq_nAlphaBound} that there exists $C_4 > 0$ such that the absolute value of every term of simple degree $s$ is bounded above by $C_4 n^{-s}$. Since $2 \leq r \leq s$ and $k \leq n$, it follows that
\[
\EEE{\left(\sum_{1 \leq a_1 < \ldots < a_r \leq k} X^{\alpha_1}_{na_1}\ldots X^{\alpha_r}_{na_r}\right)^2} \leq C_5 (k/n)^{2}.
\]
Therefore, from~\eqref{eq_boundrBig} and the fact that $d_i \leq \gamma_i \leq \gamma$, we obtain~\eqref{eq_expandedWords}. This completes the case $r \geq 2$.

It remains to consider the case $r = 1$. Define now $\gamma_i := d_i(q_i\vee 1)$. It holds that
\[
\PPP{\norms{\sum_{1 \leq a \leq k}X^{i}_{na}}^{1/d_i} > \delta} \leq \delta^{-\gamma_j} \EEE{\norms{\sum_{1 \leq a \leq k}X^i_{na}}^{q_i\vee 1}}.
\]
Denote $\mu_{n} = \EEE{X^i_{n1}}$. Then there exist $C_6, C_7 > 0$ such that
\begin{align*}
\EEE{\norms{\sum_{1 \leq a \leq k}X^i_{na}}^{q_i\vee 1}}
&= \EEE{\norms{\sum_{1 \leq a \leq k}X^i_{na} - \mu_n + \mu_n}^{q_i\vee 1}} \\
&\leq C_6\EEE{\norms{\sum_{1 \leq a \leq k}X^i_{na} - \mu_n}^{q_i\vee 1} + \norms{k/n}^{q_i\vee 1}} \\
&\leq C_7\left(\EEE{\sum_{1 \leq a \leq k}\norms{X^i_{na} - \mu_n}^{q_i\vee 1}} + (k/n)^{q_i\vee 1} \right),
\end{align*}
where the first inequality is due to~\eqref{eq_nEWBound}, and the second inequality is due to the (discrete) Burkholder--Davis--Gundy inequality and the fact that $q_i \leq 2$.
It now follows from~\ref{point_thetaScale} and~\eqref{eq_nEWBound} that
\begin{align*}
\EEE{\norms{\sum_{1 \leq a \leq k} X^i_{na}}^{q_i\vee 1}}
&\leq C_8\left(k\EEE{|X^i_{n1}|^{q_i\vee 1}} + k|\mu_n|^{q_i\vee 1} + (k/n)^{q_i\vee 1} \right) \\
&\leq C_{9}\left(k/n + k n^{-(q_i\vee 1)} + (k/n)^{q_i\vee 1} \right) \\
&\leq C_{10}\left(k/n\right).
\end{align*}
Since $\gamma_j \leq \gamma$, this completes the case $r=1$ and the proof of the lemma.
\end{proof}

As mentioned in Remark~\ref{remark:pBiggerThanN}, Corollary~\ref{cor:pVarTightCrit} and the bound~\eqref{eq_XnUnifBound} are now sufficient to prove Theorem~\ref{thm_cadlagPathTight} for the case that $p > d_m$.

\subsection{Proof of Theorem~\ref{thm_cadlagPathTight} in the case $p \leq d_m$}\label{subsec:pLess}

\begin{lemma}\label{lem_YProjTight}
Use the notation from Lemma~\ref{lem:boundsForTight} and suppose that \ref{point_hTight}, \ref{point_nEXiBound} and \ref{point_thetaScale} hold. For all $i \in \{1,\ldots, m\}$, it holds that $(\Y^{n,i}_h)_{n \geq 1, h \in [0,1]}$ is a tight collection of real random variables.
\end{lemma}

\begin{proof}
By Remark~\ref{remark:pBiggerThanN}, $(\X^n)_{n \geq 1}$ is a tight collection of $D_o([0,1],G)$-valued random variables, from which it follows that $(\max_{1 \leq k \leq n}|X^i_{nk}|)_{n \geq 1}$ is tight for all $i \in \{1,\ldots, m\}$.
We may thus suppose that $\norm{X_{n1}} \leq R$ a.s. for some large $R>0$ and all $n \geq 1$.

Consider the decomposition $X^i_{nk} = A_{nk} + B_{nk}$ where
\[
A_{nk} = X^i_{nk}\1{\norm{X_{nk}} < \varepsilon}
\]
and
\[
B_{nk} = X^i_{nk}\1{\varepsilon \leq \norm{X_{nk}} \leq R}.
\]
We take here $\varepsilon > 0$ sufficiently small so that $\norm{x} < \varepsilon$ implies $x \in U$.
It suffices to prove that $(\sum_{a=1}^k B_{na})_{n \geq 1, k \in \{1, \ldots, n\}}$ and $(\sum_{a=1}^k A_{na})_{n \geq 1, k \in \{1, \ldots, n\}}$ are tight collections of real random variables.

Let $C_1 = C_1(\varepsilon) > 0$ be such that $C_1\theta(x) > |x^i|\1{\varepsilon \leq \norm{x} \leq R}$ for all $x\in G$.
Since $\theta$ scales $X_{nk}$, it holds that
\[
\sup_{n \geq 1, k \in \{1,\ldots, n\}}\EEE{\norms{\sum_{a=1}^k B_{na}}} \leq \sup_{n \geq 1} C_1n\EEE{\theta(X_{n1})} < \infty,
\]
and thus $(\sum_{a=1}^k B_{na})_{n \geq 1, k \in \{1, \ldots, n\}}$ is tight.

Now observe that~\ref{point_nEXiBound} and~\ref{point_thetaScale} imply that $\sup_{n \geq 1} n\norms{\EEE{A_{n1}}} < \infty$. Moreover~\ref{point_thetaScale} implies that there exists $C_2 > 0$ such that $|\xi_i(x)|^2 \leq C_2\theta(x)$ for all $x \in G$ and $i \in \{1,\ldots, m\}$. Since $A_{na} = \1{\norm{X_{na}} < \varepsilon}\xi_i(X_{na})$, and $A_{n1},\ldots, A_{nn}$ are iid, it follows that
\[
\sup_{n \geq 1, k \in \{1,\ldots, n\}} \EEE{\left(\sum_{a=1}^k A_{na}\right)^2} \leq \sup_{n \geq 1} \sum_{a,b=1}^n \norms{\EEE{A_{na}A_{nb}}} < \infty,
\]
and thus $(\sum_{a=1}^k A_{na})_{n \geq 1, k \in \{1, \ldots, n\}}$ is also tight.
\end{proof}

For $i \in \{1,\ldots, m\}$, let $\g^{> i}$ be the subspace of $\g$ spanned by $\{u_j \mid j > i\}$. Note that $\g^{> i}$ is an ideal of $\g$, and so we can define the Lie algebra $\g^i = \g/\g^{> i}$ and the projection map $\pi^i : \g \mapsto \g^i$. The dilations $\delta_\lambda$ on $\g$ give rise to a natural family of dilations on $\g^{i}$, and thus to a homogeneous group $G^i$ associated with $\g^i$. Equivalently, $G^i = \g/\g^{>i}$, where we have identified $\g$ with $G$ and $\g^{> i}$ with a normal subgroup of $G$.  We implicitly equip $G^i$ with an arbitrary sub-additive homogeneous norm $\normc$. For notational convenience, we also let $G^0 = \{1\}$ be the trivial group and $\pi^0 : G \mapsto G^0$ the trivial map.

\begin{corollary}\label{cor_indvComponentsTight}
Use the notation from Lemma~\ref{lem:boundsForTight} and suppose that~\ref{point_hTight}, \ref{point_nEXiBound} and \ref{point_thetaScale} hold.

\begin{enumerate}[label={(\roman*)}]
\item \label{point_ZnTight} Let $i \in \{1,\ldots, m\}$ and $p > d_i \vee \kappa$. Then $(\norm{\pi^{i}\X^n}_{\pvar;[0,1]})_{n \geq 1}$ is tight.

\item \label{point_YsTight} For every $i \in \{1, \ldots, m\}$ such that $q_i \leq 1$, $(\norm{\Y^{n,i}}_{\var{p};[0,1]})_{n \geq 1}$ is tight for all $p > q_i$.
\end{enumerate} 
\end{corollary}

\begin{proof}
\ref{point_ZnTight} Observe that $\pi^i\X^n$ is the random walk associated with the $G^{i}$-valued iid array $\pi^{i} X_{nk}$, from which the conclusion follows by Corollary~\ref{cor:pVarTightCrit} and the bound~\eqref{eq_XnUnifBound} of Lemma~\ref{lem:boundsForTight} (cf. Remark~\ref{remark:pBiggerThanN}).
% where $\pi^i X_{nk}$ playing the role of $X_{nk}$.
%Applying the bound~\eqref{eq_XnUnifBound} of Lemma~\ref{lem:boundsForTight} and Remark~\ref{remark:pBiggerThanN}
%(taking $\gamma := \floor p \wedge \kappa < p$)
%to the array $\pi^{i} X_{nk}$ concludes the proof.

\ref{point_YsTight} From Corollary~\ref{cor:pVarTightCrit} and the bound~\eqref{eq_YnUnifBound} of Lemma~\ref{lem:boundsForTight}, it suffices to check that condition~\ref{point:coordsTight} of Corollary~\ref{cor:pVarTightCrit} holds for the processes $(\Y^{n,i})_{n \geq 1}$. However this follows from Lemma~\ref{lem_YProjTight}.
\end{proof}

Recall that we identify $G$ with $\g$ via the $\exp$ map. For functions $\z : [0,T] \mapsto G$ and $\y : [0,T] \mapsto \R$, define the function
\[
\x = \z + \y : [0,T] \mapsto G, \; \; \x_t = \z_t + \y_t u_m.
\]
(where addition is taken in $\g$). The following lemma is a simple consequence of the fact that $(\x_s^{-1}\x_t)^i = (\z_s^{-1}\z_t)^i$ for all $i \in \{1,\ldots, m-1\}$, and
\[
(\x_s^{-1}\x_t)^m = (\z_s^{-1}\z_t)^m + \y_t - \y_s.
\]

\begin{lemma}\label{lem_centralPathBound}
Let $\z : [0,T] \mapsto G$ and $\y : [0,T] \mapsto \R$ be functions, and let $\x = \z + \y$. Then for any $p > 0$ there exists $C=(p,G) > 0$ such that
\[
\norm{\x}_{\pvar;[0,T]} \leq C\left(\norm{\z}_{\pvar;[0,T]} + \norm{\y}^{1/d_m}_{\var{p/d_m};[0,T]} \right).
\]
\end{lemma}

\begin{lemma}\label{lem_ZYVarControlXVar}
Let $p > 0$ and $i \in \{1,\ldots, m\}$ be the largest index such that $d_i \leq p$ (with $i=0$ if no such index exists). Consider elements $x_1,\ldots, x_n \in G$ and let $\x \in D_o([0,1], G)$ be the associated walk.
For $j \in \{i + 1, \ldots, m\}$, let $\y^j \in D_o([0,1], \R)$ be the walk associated with the real numbers $x^j_1,\ldots, x^j_n$.

Then there exists $C = C(p,G) > 0$, such that
\[
\norm{\x}_{\pvar;[0,1]} \leq C\left(\norm{\pi^{i}\x}_{\pvar;[0,1]} + \sum_{j = i + 1}^m \norm{\y^j}^{1/d_j}_{\var{p/d_j};[0,1]}\right).
\]
\end{lemma}

\begin{proof}
By induction on $m$ and Lemma~\ref{lem_centralPathBound}, it suffices to show that if $p < d_m$ and $x^m_k = 0$ for all $k \in \{1,\ldots, n\}$, then $\norm{\x}_{\pvar;[0,1]} \leq C_1\norm{\pi^{m-1} \x}_{\pvar;[0,1]}$. This in turn follows from the CBH formula~\eqref{eq:CBH} and an application of Young's partition coarsening argument (see, e.g.,~\cite[p. 50]{Lyons07}).
%~\cite{FrizHairer14} p.52
\end{proof}

We now have all the ingredients for the proof of Theorem~\ref{thm_cadlagPathTight}.

\begin{proof}[Proof of Theorem~\ref{thm_cadlagPathTight}]
The fact that $(\X^n)_{n \geq 1}$ is a tight collection of $D_o([0,1],G)$-valued random variables follows directly from Corollary~\ref{cor:pVarTightCrit} and the bound~\eqref{eq_XnUnifBound} of Lemma~\ref{lem:boundsForTight} (cf. Remark~\ref{remark:pBiggerThanN}).

Let $p > \kappa$. Decreasing $p$ if necessary, we may suppose $p \neq d_i$ for all $i \in \{1,\ldots, m\}$. Let $i \in \{1,\ldots, m\}$ be the largest index such that $d_i < p$ (with $i=0$ if no such index exists). Define $\Y^{n,j}$ as in Lemma~\ref{lem:boundsForTight}, and note that $q_j < p/d_j < 1$ for all $j \in \{i + 1,\ldots, m\}$. It follows by Corollary~\ref{cor_indvComponentsTight} that $(\norm{\pi^i \X^n}_{\pvar;[0,1]})_{n \geq 1}$ and $(\norm{\Y^{n,j}}_{\var{p/d_j};[0,1]})_{n \geq 1}$ are tight for all $j \in \{i+1,\ldots, m\}$.
We conclude by Lemma~\ref{lem_ZYVarControlXVar} that $(\norm{\X^n}_{\pvar;[0,1]})_{n \geq 1}$ is also tight.
\end{proof}

\section{L{\'e}vy processes in homogeneous groups}\label{sec:LevyProcesses}

\subsection{Finite $p$-variation of L{\'e}vy processes} \label{subsec_finitePVar}

Consider a homogeneous group $G$ and recall the notation of Section~\ref{sec:Hom}. Recall also the definitions of $\Gamma_i$, $J$, and $K$ from Section~\ref{subsec:approxWalk}. The following is the main result of this subsection.

\begin{theorem}\label{thm_LevyFinitepVar}
Let $p > 0$ and $\X$ be a L{\'e}vy process in $G$ with triplet $(A, B, \Pi)$.
\begin{enumerate}[label={(\arabic*)}]
\item \label{point_finitepVar} Then $\norm{\X}_{\pvar;[0,1]} < \infty$ a.s. provided that all of the following hold:
\begin{enumerate}[label={(\roman*)}]
\item \label{point_pBigger2} $p > 2d_j$ for all $j \in J$;

\item \label{point_pBiggerUk} $p > d_k$ for all $k \in K$;

\item \label{point_pBiggerBeta} $p/d_i > \sup \{\Gamma_i\}$ for all $i \in \{1, \ldots, m\}$.
\end{enumerate}

\item \label{point_infinitepVar} Then $\norm{\X}_{\pvar;[0,1]} = \infty$ a.s. provided that one of the following holds:
\begin{enumerate}[resume,label={(\roman*)}]
\item $p \leq 2d_j$ for some $j \in J$;

\item \label{point_pLessUk} $p < d_k$ for some $k \in K$;

\item \label{point_pInGamma} $p/d_i \in \Gamma_i$ for some $i \in \{1,\ldots, m\}$.
\end{enumerate}
\end{enumerate}
\end{theorem}

\begin{remark}
Note that
%Comparing parts~\ref{point_finitepVar} and~\ref{point_infinitepVar}, we see that
Theorem~\ref{thm_LevyFinitepVar} does not completely determine all values of $p$ for which $\norm{\X}_{\pvar;[0,1]} < \infty$ a.s. (e.g., when $p/d_i = \sup \{\Gamma_i\} \notin \Gamma_i$ for some $i \in \{1, \ldots, m\}$).
Comparing Theorem~\ref{thm_LevyFinitepVar} with known results for $\R$-valued L{\'e}vy processes~\cite{Bretagnolle72II}, we suspect that~\ref{point_pBiggerUk} and~\ref{point_pBiggerBeta} can be replaced by $p \geq d_k, \forall k \in K$, and $p/d_i \notin \Gamma_i, \forall i \in \{1,\ldots, m\}$, respectively, which would complete the characterisation.
\end{remark}

\begin{remark}
In~\cite{FrizShekhar12}, the authors determined sufficient conditions under which a L{\'e}vy process in the step-$2$ free nilpotent Lie group $G^2(\R^d)$ possesses finite $p$-variation for $p \in (2,3)$, along with a partial converse that their conditions cannot in general be weakened (\cite[Theorem~50]{FrizShekhar12}). In this context, Theorem~\ref{thm_LevyFinitepVar} generalises this result to all $N \geq 1$ and $p > 0$ and provides a sharp converse. In particular, the Carnot--Carath{\'e}odory Blumenthal--Getoor index $\beta$ introduced in~\cite{FrizShekhar12} for a L{\'e}vy measure on $G^N(\R^d)$ relates to our definition of $\Gamma_i$ by $\beta = \max\{d_1\sup\{\Gamma_1\},\ldots, d_m\sup\{\Gamma_m\}\}$, in which case~\ref{point_pBiggerBeta} reads $p > \beta$.
\end{remark}

For the proof of Theorem~\ref{thm_LevyFinitepVar}, we require the following lemma.

\begin{lemma}\label{lem_associatedWalkTight}
Let $\X$ be a L{\'e}vy process in $G$ with triplet $(A,B,\Pi)$. Assume $p > 0$ satisfies~\ref{point_pBigger2},~\ref{point_pBiggerUk}, and~\ref{point_pBiggerBeta} of Theorem~\ref{thm_LevyFinitepVar}.

Let $X_{nj}$ be the associated iid array constructed in Section~\ref{subsec:approxWalk} and $\X^n$ the associated random walk. Then $(\norm{\X^n}_{\pvar;[0,1]})_{n \geq 1}$ is tight.
\end{lemma}

\begin{proof}
Let $0 < p' < p$ such that $p'$ also satisfies~\ref{point_pBigger2},~\ref{point_pBiggerUk}, and~\ref{point_pBiggerBeta} of Theorem~\ref{thm_LevyFinitepVar}. For all $i \in \{1,\ldots, m\}$, define $q_i := 2\wedge(p'/d_i)$, and let $\theta$ be a scaling function on $G$ such that $\theta \equiv \sum_{i = 1}^m |\xi_i|^{q_i}$ in a neighbourhood of $1_G$.

Observe that $q_i \notin \Gamma_i$ for all $i \in \{1,\ldots, m\}$, $q_j = 2$ for all $j \in J$, and $q_k > 1$ for all $k \in K$. Thus, by Lemma~\ref{lem_thetaScales}, $\theta$ scales the array $X_{nj}$.
Moreover, since $\X^n \convd \X$ as $D_o([0,1],G)$-valued random variables, it follows that the array $X_{nj}$ satisfies the conditions of Theorem~\ref{thm_cadlagPathTight} with the above $\theta$ and $q_1,\ldots, q_m$ (see Remark~\ref{remark:convLevyImpliesConds}). Since $p > \max\{q_1 d_1,\ldots, q_m d_m\}$, it follows that $(\norm{\X^n}_{\pvar;[0,1]})_{n \geq 1}$ is tight.
\end{proof}

\begin{proof}[Proof of Theorem~\ref{thm_LevyFinitepVar}]
\ref{point_finitepVar} follows from Lemma~\ref{lem_associatedWalkTight} and part~\ref{point:finitepVar} of Proposition~\ref{prop:finiteVarLim}, while~\ref{point_infinitepVar} follows directly from Corollary~\ref{cor_xiInfVar} and Proposition~\ref{prop_BlumenthalGetoor}.
\end{proof}

\subsection{Convergence in $p$-variation}\label{subsec:convFixedPathFunc}

In this subsection we consider continuous random paths $(\X^{n,\phi})_{n \geq 1}, \X^\phi$, constructed from a random walk $\X^n$ and a L{\'e}vy process $\X$ by connecting their left- and right-limits with a path function $\phi$, and give conditions under which $\X^{n,\phi} \convd \X^\phi$ as $C^{\pvar}([0,1],G)$-valued random variables. All relevant material on path functions is collected in Appendix~\ref{appendix:PathFuncs}.

\begin{theorem}\label{thm:convToLevy}
Let $X_{nj}$ be an iid array in $G$ and $\X^n$ the associated random walk. Let $\X$ be a L{\'e}vy process in $G$ with triplet $(A,B,\Pi)$. Suppose that $\X^n \convd \X$ as $D_o([0,1],G)$-valued random variables and that $\theta$ scales $X_{nj}$, where $\theta \equiv \sum_{i=1}^m |\xi_i|^{q_i}$ in a neighbourhood of $1_G$ for some $0 < q_i \leq 2$.

Let $W \subseteq G$ be a closed subset such that $\supp(\Pi) \subseteq W$ and $X_{n1} \in W$ a.s. for all $n \geq 1$. Let $p > \max\{1, q_1 d_1,\ldots,q_m d_m\}$ and $\phi : W \mapsto C_o^{\pvar}([0,1],G)$ a $p$-approximating, endpoint continuous path function.

Then $\norm{\X^\phi}_{\pvar;[0,1]} < \infty$ a.s., and
for every $p' > p$, $\X^{n,\phi} \convd \X^\phi$ as $C_o^{0,\pprimevar}([0,1], G)$-valued random variables.
\end{theorem}

\begin{remark}\label{remark:jumpsInSupp}
In the statement of Theorem~\ref{thm:convToLevy}, note that, a.s., $\X_{t-}^{-1}\X_t \in \supp(\Pi)$ for every jump time $t$ of $\X$ (e.g., \cite[Proposition~1.4]{Liao04}). Hence, for any (measurable) path function $\phi$ defined on $\supp(\Pi)$, $\X^\phi$ is indeed a well-defined $C_o([0,1],G)$-valued random variable.
\end{remark}

\begin{proof}
By Theorem~\ref{thm_cadlagPathTight}, it holds that $(\norm{\X^n}_{\pvar;[0,1]})_{n \geq 1}$ is tight, and thus, by Proposition~\ref{prop_pvarPsi}, $(\norm{\X^{n,\phi}}_{\pvar;[0,1]})_{n \geq 1}$ is also tight.
Since $\phi$ is endpoint continuous on $W$, it follows by Proposition~\ref{prop_convFromDToC} that $\X^{n,\phi} \convd \X^{\phi}$ as $C_o([0,1],G)$-valued random variables. The conclusion now follows from Proposition~\ref{prop:finiteVarLim}.
\end{proof}

\subsection{Applications to rough paths theory}\label{subsec:RPs}

We apply the results so far developed in the paper to the theory of rough paths and stochastic flows. Following Example~\ref{ex:gradedLie}, denote by $G^N(\R^d)$ the step-$N$ free nilpotent Lie group over $\R^d$ and let $\g^N(\R^d)$ be its Lie algebra. For the remainder of the paper, unless otherwise stated, we shall always let $G = G^N(\R^d)$ and $\g = \g^N(\R^d)$. Being a Carnot group, $G$ comes equipped with a natural homogeneous structure and we note that $u_1,\ldots, u_d$ can be identified with a basis of $\R^d$.

For $1 \leq p < N+1$, we let $WG\Omega_p(\R^d) := C^{\pvar}([0,T], G)$, equipped with the metric $d_{\pvar;[0,T]}$, denote the space of weakly geometric $p$-rough paths. Given an element $\x \in WG\Omega_p(\R^d)$, and a collection $(f_i)_{i=1}^d$ of vector fields in $\Lip^{\gamma+k-1}(\R^e)$ for $\gamma > p \geq 1$ and an integer $k \geq 1$, there is a unique solution to the rough differential equation (RDE)
\begin{equation}\label{eq:RDE}
d\y_t = f(\y_t)d\x_t, \; \; \y_0 \in \R^e.
\end{equation}
We refer to~\cite{FrizVictoir10} for further details on (geometric) rough paths theory.

\subsubsection{Stochastic flows}

\label{subsubsec:StochFlows}

Let $U^\x_{T\leftarrow 0} : \y_0 \mapsto \y_T$ denote the flow associated to~\eqref{eq:RDE}, which we recall is an element of $\Diff^k(\R^e)$, the group of $C^k$-diffeomorphisms of $\R^e$. Recall that the map $U^\cdot_{T\leftarrow 0} : WG\Omega_p(\R^d) \mapsto \Diff^k(\R^e)$ is a continuous function on $WG\Omega_p(\R^d)$ when $\Diff^k(\R^e)$ is equipped with the $C^k$-topology (\cite[Theorem~11.12]{FrizVictoir10}). The following result is now an immediate corollary of Theorem~\ref{thm:convToLevy}.

\begin{corollary}\label{cor:convFlowsToLevy}
Suppose the assumptions of Theorem~\ref{thm:convToLevy} are verified for some $1 \leq p < N+1$. Let $\gamma > p$, $k \geq 1$ an integer, and $(f_i)_{i=1}^d$ a collection of vector fields in $\Lip^{\gamma+k-1}(\R^e)$.
Let $U^\cdot_{1\leftarrow 0} : WG\Omega_p(\R^d) \mapsto \Diff^k(\R^e)$ be the associated flow map.

Then $U^{\X^{n,\phi}}_{1\leftarrow 0} \convd U^{\X^\phi}_{1\leftarrow 0}$ as $\Diff^k(\R^e)$-valued random variables.
\end{corollary}

We demonstrate how one can apply Corollary~\ref{cor:convFlowsToLevy} to show weak convergence of stochastic flows in the following three examples, the first of which extends a result of Kunita~\cite{Kunita95}.

\begin{example}[Linear interpolation, Kunita~\cite{Kunita95}]\label{ex_piecewiseLinear}
Let $Y_{n1},\ldots, Y_{nn}$ be an iid array in $\R^d$ such that the associated random walk $\Y^n$ converges in law as a $D_o([0,1],\R^d)$-valued random variable to a L{\'e}vy process $\Y$ in $\R^d$.

We claim that ODE flows driven by the piecewise linear interpolation of the random walk $\Y^n$ along $\Lip^{\gamma+k-1}$ vector fields, for any $\gamma > 2$, $k \geq 1$, converge in law as $\Diff^k(\R^e)$-valued random variables.

Indeed, setting $G := G^2(\R^d)$, consider the $G$-valued iid array $X_{nj} := e^{Y_{nj}}$.
It follows that $X_{nj}$ is scaled by any scaling function $\theta$ on $G$ for which $\theta \geq \sum_{i=1}^d |\xi_i|^2$.
Moreover, using the fact that $\xi_i\circ\exp \in C^\infty_c(\R^d)$, one can readily see by Theorem~\ref{thm_iidConvLevy} that $\X^n\convd \X$ as $D_o([0,1],G)$-valued random variables, where $\X$ is a $G$-valued L{\'e}vy process.
Finally, consider the $1$-approximating, endpoint continuous path function
\[
\phi : \exp(\R^d) \mapsto C_o([0,1], G), \; \; \phi(e^x)_t = e^{tx}.
\]
Then $\X^{n,\phi}$ is (a reparametrisation of) the lift of the piecewise linear interpolation of $\Y^n$. Furthermore, the conditions of Theorem~\ref{thm:convToLevy} are satisfied for all $p > 2$, so that $\X^{n,\phi} \convd \X^{\phi}$ as $C_o^{0,\pvar}([0,1],G)$-valued random variables, from which the desired claim follows (Corollary~\ref{cor:convFlowsToLevy}).
\end{example}

\begin{remark}\label{remark:MarcusSDE}
In the previous example, it is easy to see that RDEs driven by $\X^\phi$ coincide (up to reparametrisation) with general (Marcus) RDEs driven by $\X$ (in the sense of~\cite[Section~6]{FrizShekhar12}) and thus with Marcus SDEs driven by $\Y$.
\end{remark}

\begin{remark}
The previous example extends the main result of Kunita~\cite{Kunita95} (Theorem~4 and its Corollary). The main restriction of Kunita's result is the assumption that the vector fields $f_1,\ldots, f_d$, along which $\Y^n$ drives an ODE, generate a finite dimensional Lie algebra,
% (also, Kunita only considers smooth vector fields, however this point is minor). This assumption
which essentially allows one to reduce the problem to a random walk on a Lie group (see~\cite[p. 340]{Kunita95}). Our approach, based on convergence under rough path topologies, bypasses this restriction and provides a natural interpretation of the limiting stochastic flow as the solution of an RDE.
\end{remark}

\begin{remark}\label{remark:BFH}
Breuillard, Friz and Huesmann~\cite{Breuillard09} showed a result analogous to the above example in a special case where the limiting L{\'e}vy process $\Y$ is Brownian motion.
The main analytic tool used in~\cite{Breuillard09} is the Kolmogorov--Lamperti criterion to show tightness of $(\norm{\Y^n}_{\pHol;[0,1]})_{n \geq 1}$. This is of course stronger than tightness of $(\norm{\Y^n}_{\pvar;[0,1]})_{n \geq 1}$, and cannot hold whenever the limiting L{\'e}vy process has jumps, which demonstrates an example where the tightness criterion Theorem~\ref{thm:pVarTightCrit} can be used as an effective alternative to the classical Kolmogorov--Lamperti criterion.
\end{remark}

In the following example we demonstrate how Example~\ref{ex_piecewiseLinear} generalises to non-linear interpolations with essentially no extra effort.

\begin{example}[Non-linear interpolation]\label{ex_nonLinearInter}
As in Example~\ref{ex_piecewiseLinear}, let $Y_{nj}$ be an iid array in $\R^d$ such that $\Y^n \convd \Y$ for a L{\'e}vy process $\Y$ in $\R^d$.

Instead of piecewise interpolations, consider now any $q$-approximating endpoint continuous path function $\psi : \R^d \mapsto C_o^{\qvar}([0,1], \R^d)$ for some $1 \leq q < 2$.
Set again $G = G^2(\R^d)$ and define the injective map $f : \R^d \mapsto G$ by
\[
f(x) = \Lyons_2(\psi(x))_1,
\]
where $S_2 : C_o^{\qvar}([0,1],\R^d) \mapsto C_o^{\qvar}([0,1],G)$ denotes the level-$2$ lifting map.

Consider the iid array $X_{nj} := f(Y_{nj})$.
It follows readily from the assumption that $\psi$ is $q$-approximating that $X_{nj}$ is again scaled by any scaling function $\theta$ on $G$ for which $\theta \geq \sum_{i=1}^d |\xi_i|^2$.
We now make the assumption on $\psi$ and $Y_{n1}$ that for all $i,j \in \{1,\ldots m\}$ the following limits exist:
\begin{align*}
D^i &:= \lim_{n \rightarrow \infty} n\EEE{\xi_i(f(Y_{n1}))}, \\
C^{i,j} &:= \lim_{n \rightarrow \infty} n\EEE{\xi_i(f(Y_{n1}))\xi_j(f(Y_{n1}))} - \int_{\R^d} \xi_i(f(x))\xi_j(f(x)) \Pi(dx).
\end{align*}
This occurs, for example, whenever every $\xi_i \circ f$ is twice differentiable at zero, but in general will depend on the array $Y_{nj}$ and the path function $\psi$.

Under this assumption, it follows from Theorem~\ref{thm_iidConvLevy} that the random walk $\X^n$ associated with the array $X_{nj}$ converges in law to the L{\'e}vy process $\X$ with triplet $(C,D,\Xi)$, where $\Xi$ is the pushforward of $\Pi$ by $f$.

Define now the $q$-approximating, endpoint continuous path function $\phi : f(\R^d) \mapsto C_o^{\qvar}([0,1],G)$ by
\[
\phi(f(x)) = \Lyons_2(\psi(x)).
\]
Observe that the conditions of Theorem~\ref{thm:convToLevy} are again satisfied for all $p > 2$, so that $\X^{n,\phi} \convd \X^{\phi}$ as $C_o^{0,\pvar}([0,1],G)$-valued random variables.

Note that $\X^{n,\phi}$ is, up to reparametrisation, the lift of $\Y^{n,\psi}$ (which is itself, up to reparametrisation, the random walk $\Y^n$ interpolated by the path function $\psi$).
It follows that ODE flows driven by $\Y^{n,\psi}$ along $\Lip^{\gamma+k-1}$ vector fields, for any $\gamma > 2$, $k \geq 1$, converge in law as $\Diff^k$-valued r.v.'s to the corresponding RDE flow driven by $\X^\phi$ (Corollary~\ref{cor:convFlowsToLevy}).
\end{example}

\begin{remark}\label{remark:McShane}
McShane~\cite{McShane72} considered non-linear interpolations of the increments of Brownian motion and showed strong convergence of the corresponding ODEs to the associated Stratonovich SDE with an adjusted drift. We note that the family of path functions $\psi$ to which the above example applies includes the non-linear interpolations considered by McShane (\cite[p. 285]{McShane72}) (provided that $Y_{nk}$ are also sufficiently well behaved, e.g., the increments of Brownian motion, to ensure that the limits $C^{i,j}$ and $D^i$ exist). The above example can thus be seen as a weak convergence analogue for general L{\'e}vy processes of the results in~\cite{McShane72}.
In a similar way, the following example is analogous to the results of Sussman~\cite{Sussmann91} on non-linear approximations of Brownian motion.
\end{remark}

\begin{example}[Perturbed walk]\label{ex_pertubedWalk}
As in Examples~\ref{ex_piecewiseLinear} and~\ref{ex_nonLinearInter}, let $Y_{nj}$ be an iid array in $\R^d$ such that $\Y^n \convd \Y$ for a L{\'e}vy process $\Y$ in $\R^d$.

Let $N \geq 2$ and as before denote $G = G^N(\R^d)$ and $\g = \g^N(\R^d)$. Fix a path $\gamma \in C_o^{\onevar}([0,1], \R^d)$ such that $v := \log(\Lyons_N(\gamma)_{0,1})$ is in the center of $\g$ (that is, $v^i = 0$ for all $i \in \{1,\ldots, m\}$ such that $d_i < N$).

In this example we wish to consider the random path $\ZZ^n \in C^{\onevar}([0,1], \R^d)$ defined by linearly joining the points of $\Y^n$, and, between each linear chord, running along the path $n^{-1/N}\gamma$.

Define the closed subset
\[
W := \left\{\exp(y)\exp(\lambda v) \midset y \in \R^d, \lambda \geq 0\right\} \subseteq G.
\]
Note that for every $x \in W$ decomposes uniquely as $x = \exp(y)\exp(\lambda v)$ for some $y \in \R^d$ and $\lambda \geq 0$.
Define then the $1$-approximating, endpoint continuous path function $\phi : W \mapsto C_o^{\onevar}([0,1],G)$ by
\[
\phi(\exp(y)\exp(\lambda v))_t = 
\begin{cases} \exp(2t y) &\mbox{if } t \in [0,1/2] \\ 
\exp(y)\Lyons_N(\lambda^{1/N}\gamma)_{2t-1} &\mbox{if } t \in (1/2,1].
\end{cases}
\]
Consider the $G$-valued iid array $X_{nj} := \exp(Y_{nj})\exp(n^{-1} v)$ and the associated random walk $\X^n$. Observe that $\X^{n,\phi}$ is (a reparametrisation of) the level-$N$ lift of the path $\ZZ^n$ described above.

We now claim that $\X^n \convd \X$ for a L{\'e}vy process $\X$ in $G$. A straightforward way to show this is to take local coordinates $\sigma_1,\ldots, \sigma_d \in C^\infty_c(\R^d)$ so that $\sigma := \sum_{i=1}^d \sigma_iu_i$ is the identity in a neighbourhood of zero, and write the triplet of $\Y$ as $(A,B,\Pi)$ with respect to $\sigma_1,\ldots, \sigma_d$.
Define the functions
\[
f_n : \R^d \mapsto \g, \; \; f_n(y) = \xi(e^y e^{v/n}),
\]
so that $\xi(X_{nk}) = f_n(Y_{nk})$.
Note that, since $v$ is in the centre of $\g$, there exists a neighbourhood of zero $V \subset \R^d$ and $n_0 > 0$ such that for all $n \geq n_0$
\[
f_n(y) = \sum_{i=1}^d \sigma_i(y) u_i + \xi(e^{v/n}) + h_n(y),
\]
where $h_n \equiv 0$ on $V$. It readily follows that
\[
\lim_{k\rightarrow \infty} \sup_{n \geq 1} k\norms{\EEE{f_n(Y_{k1}) - \xi(e^{v/n})} - Q_n} = 0,
\]
where
\[
Q_n := \sum_{i=1}^d B^iu_i + \int_{\R^d} h_n(y)\Pi(dy) \in \g.
\]

Observe now that for all $y \in \R^d$, $\lim_{n\rightarrow \infty}h_n(y) = \xi(e^y) - \sigma(y)$, so that by dominated convergence,
\[
\lim_{n \rightarrow \infty} Q_n = \sum_{i=1}^d B^iu_i + \int_{\R^d} \left(\xi(e^y) - \sigma(y)\right)\Pi(dy) =: Q \in \g,
\]
from which it follows that
\[
\lim_{n \rightarrow \infty} n\EEE{f_n(Y_{n1}) - f_n(0)} = Q.
\]
Since $nf_n(0) = v$ for all $n$ sufficiently large, we obtain that the following limit exists:
\[
D := \lim_{n \rightarrow \infty}n\EEE{\xi(X_{n1})} \in \g.
\]
Furthermore, letting $\Xi$ denote the pushforward of $\Pi$ by $\exp$, one can show in exactly the same way that
\[
C^{i,j} := \lim_{n \rightarrow\infty} n\EEE{\xi_i(X_{n1})\xi_j(X_{n1})} - \int_{G}\xi_i(x)\xi_j(x)\Xi(dx) 
\]
exists for all $i,j \in \{1,\ldots, m\}$, and that
\[
\lim_{n \rightarrow \infty} n\EEE{f(X_{n1})} = \int_{G} f(x)\Xi(dx)
\]
for every $f \in C_b(G)$ which is identically zero on a neighbourhood of $1_G$.
It follows by Theorem~\ref{thm_iidConvLevy} that $\X^n \convd \X$ as claimed, where $\X$ is the L{\'e}vy process with triplet $(C,D,\Xi)$.

Finally, one readily sees that $X_{nj}$ is scaled by any scaling function $\theta$ on $G$ for which
\[
\theta \geq \sum_{i=1}^d|\xi_i|^2 + \sum_{\substack{1 \leq i \leq m \\ d_i = N}} |\xi_i|.
\]

It now follows by Theorem~\ref{thm:convToLevy} that for all $p > N$, $\X^{n,\phi} \convd \X^{\phi}$ as $C_o^{0,\pvar}([0,1],G)$-valued r.v.'s.
In particular,
ODE flows driven by the random paths $\ZZ^n$ along $\Lip^{\gamma+k-1}$ vector fields, for any $\gamma > N$, $k \geq 1$, converge in law as $\Diff^k$-valued r.v.'s to the corresponding RDE flow driven by $\X^\phi$ (Corollary~\ref{cor:convFlowsToLevy}).
\end{example}

\begin{remark}
Note that the previous Example~\ref{ex_piecewiseLinear} is a special case of Example~\ref{ex_pertubedWalk} by taking $v = 0$ and $\gamma$ the constant path $\gamma \equiv 0$. Building on Remark~\ref{remark:MarcusSDE}, one can verify that RDEs driven by $\X^\phi$ coincide (up to reparametrisation) with the associated Marcus SDEs driven by $\Y$ with an adjusted drift given by appropriate $N$-th level Lie brackets of the driving vector fields (cf.~\cite{FrizOberhauser09} and~\cite[Section~13.3.4]{FrizVictoir10}).
\end{remark}

\subsubsection{The L{\'e}vy--Khintchine formula for L{\'e}vy rough paths}\label{subsubsec:LKFormula}

In this subsection we determine a formula for the characteristic function (in the sense of~\cite{ChevyrevLyons16}) of the signature of a L{\'e}vy rough path.

Recall that for every $\x \in WG\Omega_p(\R^d)$, there exists an element
\[
S(\x)_{0,T} = (1,S(\x)^1_{0,T},S(\x)^2_{0,T}, \ldots) \in T((\R^d)) = \prod_{k=0}^\infty (\R^d)^{\otimes k},
\]
called the \emph{signature} of $\x$, where $S(\x)_{0,T}^k$ encodes all the $k$-fold iterated integrals of $\x$.
A fundamental result in rough paths theory is that $S(\x)_{0,T}$ belongs to a certain group $G(\R^d)$ contained in the set of group-like elements of $T((\R^d))$ (for the tensor Hopf algebra structure). Furthermore, for every linear map $f \in \LLL(\R^d,\LLL(\R^e, \R^e))$, the series $\sum_{k=0}^\infty f^{\otimes k}(S(\x)_{0,T}^k)$ converges absolutely to an operator $f(S(\x)_{0,T}) \in \LLL(\R^e,\R^e)$ (which is precisely the flow $U^\x_{T\leftarrow 0}$ associated with the RDE~\eqref{eq:RDE} upon treating $f$ as a collection of linear vector fields on $\R^e$).

For a finite-dimensional complex Hilbert space $H$, let $\uu(H) \subset \LLL(H,H)$ denote the Lie algebra of anti-Hermitian operators on $H$ and $\UU(H) \subset \LLL(H,H)$ the group of unitary operators on $H$. Note that every $f \in \LLL(\R^d, \uu(H))$ naturally induces a map $f : G(\R^d) \mapsto \LLL(H,H)$ (which is continuous for the topology on $G(\R^d)$ introduced in~\cite{ChevyrevLyons16}) given by $f(x) = \sum_{k=0}^\infty f^{\otimes k}(x^k)$, where $x^k \in (\R^d)^{\otimes k}$ denotes the level-$k$ projection of $x$. Note that $f$ satisfies $f(x) \in \UU(H)$ and $f(xy) = f(x)f(y)$ for all $x,y \in G(\R^d)$, i.e., $f : G(\R^d) \mapsto \UU(H)$ is a unitary representation of $G(\R^d)$.

One of the main results of~\cite{ChevyrevLyons16} is that for any $WG\Omega_p(\R^d)$-valued random variable $\X$, the following characteristic function
\begin{equation}\label{eq:charFunc}
\begin{split}
\LLL(\R^d,\uu(H)) &\mapsto \LLL(H,H),\\
f &\mapsto \EEE{f(S(\X)_{0,T})},
\end{split}
\end{equation}
where $H$ varies over all finite dimensional complex Hilbert spaces, uniquely determines $S(\X)_{0,T}$ as a $G(\R^d)$-valued random variable $G(\R^d)$ (more generally, this result holds for every $G(\R^d)$-valued random variable).

\begin{remark}
Boedihardjo et al.~\cite{Boedihardjo16} have recently established a conjecture of Hambly--Lyons~\cite{Hambly10} on the kernel of the map $S : WG\Omega_p(\R^d) \mapsto T((\R^d))$. A consequence of the main result of~\cite{Boedihardjo16} is that for all $\x,\y \in WG\Omega_p(\R^d)$, $S(\x)_{0,T} = S(\y)_{0,T} \Leftrightarrow U^\x_{T\leftarrow 0} = U^\y_{T\leftarrow 0}$ for all collections $(f_i)_{i=1}^d$ of vector fields in $\Lip^{\gamma}(\R^e)$ with $\gamma > p$ (not necessarily linear). In combination with the results from~\cite{ChevyrevLyons16}, it follows that for any $WG\Omega_p(\R^d)$-valued random variable $\X$, knowledge of the map~\eqref{eq:charFunc} uniquely determines the law of every RDE driven by $\X$.
\end{remark}

We now state the aforementioned formula for the characteristic function of the signature of a L{\'e}vy rough path. For a subset $W\subseteq G$, path function $\phi : W \mapsto C^{\pvar}_o([0,1], G)$, and a linear map $f \in \LLL(\R^d,\LLL(\R^e,\R^e))$, we adopt the shorthand notation
\[
f_{\phi} : W \mapsto \LLL(\R^e,\R^e), \; \; f_\phi(x) := f(S(\phi(x))_{0,1}).
\]
By interpolation (Lemma~\ref{lem:interpolate}), one can readily verify that $f_\phi$ is continuous whenever $\phi$ is $p$-approximating and endpoint continuous. Finally, we canonically treat $\g = \g^N(\R^d)$ as a subspace of the tensor algebra $T(\R^d)$, so that for any Lie algebra $\h$, every $f \in \LLL(\R^d, \h)$ extends uniquely to a linear map $f : \g \mapsto \h$.

\begin{theorem}[L{\'e}vy--Khintchine formula]\label{thm_LKFormula}
Let $\X$ be a L{\'e}vy process in $G$ with triplet $(A, B, \Pi)$. Suppose that for some $1 \leq p < N+1$, $\norm{\X}_{\pvar;[0,T]} < \infty$ a.s.. 
Let $\phi : \supp(\Pi) \mapsto C_o^{\pvar}([0,1],G)$ be a $p$-approximating, endpoint continuous path function defined on $\supp(\Pi)$.

Then for every finite-dimensional complex Hilbert space $H$ and $f \in \LLL(\R^d,\uu(H))$, it holds that the function
\[
f_\phi - \id_H - \sum_{i=1}^m f(u_i)\xi_i : \supp(\Pi) \mapsto \LLL(H,H)
\]
is $\Pi$-integrable, and that
\begin{equation}\label{eq:LKFormula}
\EEE{f(S(\X^\phi)_{0,T})} = \exp\left(T\Psi_\X(f) \right),
\end{equation}
where
\begin{multline*}
\Psi_\X(f):= \sum_{i=1}^m B^if(u_i) + \frac{1}{2} \sum_{\substack{1 \leq i,j \leq m \\ d_i + d_j \leq N}} A^{i,j} f(u_i)f(u_j) \\
+ \int_{G} \left[f_\phi(x) - \id_H - \sum_{i=1}^m \xi_i(x)f(u_i) \right]\Pi(dx).
\end{multline*}
\end{theorem}

\begin{remark}
Note that every pair $(\X,\phi)$ as in Theorem~\ref{thm_LKFormula} naturally gives rise to a convolution semigroup $(\mu_t)_{t > 0}$ of probability measures on $G(\R^d)$ (which we recall is a Polish but, if $d > 1$, non-locally compact group,~\cite{ChevyrevLyons16}) given by $\mu_t = \Law{S(\X^\phi_{[0,t]})_{0,t}}$, where $\X_{[s,t]}^\phi \in C([s,t],G)$ denotes the connecting map applied to the restriction $\restr{\X}{[s,t]}$. Moreover, treating $\phi$ as a map $\supp(\Pi) \mapsto G(\R^d)$, $x \mapsto S(\phi(x))_{0,1}$, and every $f \in \LLL(\R^d,\uu(H))$ as unitary representation of $G(\R^d)$, Theorem~\ref{thm_LKFormula} bears a close resemblance to other forms of the L{\'e}vy--Khintchine formula stated in terms of unitary representations of Lie groups (see, e.g.,~\cite[Section~5.5]{Applebaum14}).
\end{remark}

\begin{remark}
Theorem~\ref{thm_LKFormula} can be seen as an extension of a related result on the expected signature of a L{\'e}vy $p$-rough path for $1 \leq p < 3$ (\cite[Theorem~53]{FrizShekhar12}) in which $\phi$ is taken as the log-linear path function $\phi(e^x) = e^{tx}$, $\forall x \in \g$, and additional moment assumptions on the L{\'e}vy measure are required to ensure existence of the expected signature.
\end{remark}

We first record the following estimate which is readily derived from standard Euler approximations to RDEs (\cite[Corollary~10.15]{FrizVictoir10}).

\begin{lemma}\label{lem_dominatedByPhi}
Let $1 \leq p < \gamma < N+1$, $\theta$ a scaling function on $G$, $W \subseteq G$ a subset, and $\phi$ a path function defined on $W$ such that $\lim_{x \rightarrow 1_G}\norm{\phi(x)}^\gamma_{\pvar;[0,1]}/\theta(x) = 0$.

Then for all $f \in \LLL(\R^d, \LLL(\R^e,\R^e))$, it holds that
\begin{equation*}
\lim_{x \rightarrow 1_G} \frac{1}{\theta(x)}\norms{f_\phi(x) - \id_{\R^e} - \sum_{i=1}^m \xi_i(x) f(u_i)
- \frac{1}{2}\sum_{\substack{1 \leq i,j \leq m \\ d_i + d_j \leq N}} \xi_i(x) \xi_j(x)
f(u_i)f(u_j)} = 0.
\end{equation*}
\end{lemma}

\begin{proof}[Proof of Theorem~\ref{thm_LKFormula}]
Without loss of generality, we cam assume $T=1$. Let $V$ be a bounded neighbourhood of $1_G$ and $W := \supp(\Pi) \cup V$. Note that $\phi$ shrinks on the diagonal (see Remark~\ref{remark_leftInvShrinks}), so we can find a path function $\psi : W \mapsto C([0,1],G)$ which is also $p$-approximating and shrinks on the diagonal and such that $\psi \equiv \phi$ on $\supp(\Pi)$ (e.g., let $\psi(x)$ be a geodesic from $1_G$ to $x$ for all $x \in V\setminus\supp(\Pi)$).

Let $X_{n1}, \ldots, X_{nn}$ be the iid array constructed in Section~\ref{subsec:approxWalk} associated to $\X$, and let $\X^n$ be the associated random walk. Due to the shrinking support of the random variables $Y_{nj}$ from Section~\ref{subsec:approxWalk}, observe that 
\begin{equation}\label{eq_allXnLarge}
\textnormal{for every $\varepsilon > 0$, $\PPP{\X^n \notin \supp(\Pi)^\varepsilon} = 0$ for all $n$ sufficiently large},
\end{equation}
where we recall the notation $\supp(\Pi)^\varepsilon$ from Section~\ref{subsec:connectFuncSkorokhod}. In particular, for all $n$ sufficiently large, $\X^n \in W^0$ a.s., so that $\X^{n,\psi}$ is well-defined.
Observe that, due to~\eqref{eq_allXnLarge} and Proposition~\ref{prop_convFromDToC}, $\X^{n, \psi} \convd \X^{\psi}$ as $C_o([0,1], G)$-valued random variables.

Let $p < p' < N+1$. Since $\norm{\X}_{\pvar;[0,1]} < \infty$ a.s. by assumption, we deduce from Theorem~\ref{thm_LevyFinitepVar}, Lemma~\ref{lem_associatedWalkTight}, and Proposition~\ref{prop:finiteVarLim}, that
\[
\X^{n,\psi} \convd \X^{\psi} \eqd \X^\phi \; \; \textnormal{as $C_o^{0,\pprimevar}([0,1],G)$-valued random variables,}
\]
where the equality in law follows from the fact that $\psi\equiv\phi$ on $\supp(\Pi)$ and $\X \in \supp(\Pi)^0$ a.s. (see Remark~\ref{remark:jumpsInSupp}).

For all $i \in \{1,\ldots, m\}$, define $q_i := 2\wedge (p/d_i)$, and let $\theta$ be a scaling function on $G$ such that $\theta \equiv \sum_{i=1}^m |\xi_i|^{q_i}$ in a neighbourhood of $1_G$. 
It follows from Lemma~\ref{lem_thetaScales} and part~\ref{point_infinitepVar} of Theorem~\ref{thm_LevyFinitepVar} that $\theta$ scales the array $X_{nj}$.

Since $\psi$ is $p$-approximating, it holds that $\lim_{x \rightarrow 1_G}\norm{\psi(x)}_{\pvar;[0,1]}^\gamma/\theta(x) = 0$ for all $\gamma > p$.
For $f \in \LLL(\R^d, \uu(H))$, observe that $f_{\psi}$ is a map from $W$ to the unitary operators on  $H$ (thus bounded) and is continuous on $\supp(\Pi)$. Since $f_{\psi} \equiv f_{\phi}$ on $\supp(\Pi)$, it now follows from Lemmas~\ref{lem_dominatedByPhi} and~\ref{lem_contOnSupport} that
\[
\lim_{n \rightarrow \infty} n\EEE{f_{\psi}(X_{n1}) - \id_H} = \Psi_\X(f),
\]
and thus
\[
\lim_{n \rightarrow \infty} \EEE{f_{\psi}(X_{n1})}^n =  \exp(\Psi_\X(f)).
\]
Since the array $X_{nj}$ is iid, note that for all $n \geq 1$
\[
\EEE{f(S(\X^{n,\psi})_{0,1})} = \EEE{f_{\psi}(X_{n1})}^n.
\]
Since $\X^{n,\psi} \convd \X^\phi$ as $WG\Omega_{p'}(\R^d)$-valued r.v.'s, and $\x \mapsto f(S(\x)_{0,1})$ is a continuous bounded function on $WG\Omega_{p'}(\R^d)$, we obtain~\eqref{eq:LKFormula}.
\end{proof}

\begin{appendix}

\section{Path Functions}\label{appendix:PathFuncs}

In this section we introduce and study the basic properties of path functions, which serve to systematically connect the left- and right-limits of a c{\`a}dl{\`a}g path.
Throughout the section, let $(E,d)$ be a metric space and equip $C([0,T],E)$ and $D([0,T], E)$ with the uniform and the Skorokhod topology respectively.

\begin{definition}\label{def_pathFunc}
For a subset $J \subseteq E \times E$, we call $\phi : J \mapsto C([0,1], E)$ a \emph{path function} defined on $J$ if
\[
\phi(x,y)_0 = x \; \textnormal{ and } \; \phi(x,y)_1 = y, \; \forall (x,y) \in J.
\]
We say $\phi$ is \emph{endpoint continuous} if $\phi$ is continuous and
\[
\phi(x,x)_t = x, \; \forall(x,x) \in J, \; \forall t \in [0,1].
\]
For $p \geq 1$, we say $\phi$ is \emph{$p$-approximating} if for every $r > 0$ there exists $C > 0$ such that for all $(x,y) \in J$ with $d(x,y) < r$
\[
\norm{\phi(x,y)}_{\pvar;[0,1]} \leq C d(x,y).
\]
When $E$ is a Lie group, we say $\phi$ is \emph{left-invariant} if there exists a subset $B \subseteq E$ such that $J = \left\{(x,y) \midset x^{-1}y \in B\right\}$ and
\[
\phi(x,y)_t = x\phi(1_E,x^{-1}y)_t, \; \forall (x,y) \in J, \; \forall t \in [0,1].
\]
\end{definition}

Note that for a Lie group $G$ and a subset $B \subseteq G$, there is a canonical bijection between functions $\phi : B \mapsto C([0,1], G)$, for which
\begin{equation}\label{eq_leftInvPathFunc}
\phi(x)_0 = 1_G \; \textnormal{ and } \; \phi(x)_1 = x, \; \forall x \in B,
\end{equation}
and left-invariant path functions defined on $J := \{(x,y) \mid x^{-1}y \in B\}$. Whenever we speak of a path function $\phi$ defined on a subset $B\subseteq G$, we shall mean that $\phi$ satisfies~\eqref{eq_leftInvPathFunc} and shall identify $\phi$ with the corresponding left-invariant path function defined on $\{(x,y) \mid x^{-1}y \in B\}$.

\subsection{The connecting map on the Skorokhod space}\label{subsec:connectFuncSkorokhod}

For a path $\x \in D([0,T], E)$ and a time $t \in [0,T]$, define $\Delta\x_t := (\x_{t-},\x_t) \in E \times E$ and $\norm{\Delta\x_t} := d(\x_{t-},\x_t)$. We call $t$ a jump time of $\x$ if $\norm{\Delta\x_t} > 0$.

For a subset $J \subseteq E \times E$ and $\varepsilon \geq 0$ define the subset of c{\`a}dl{\`a}g paths
\[
J^\varepsilon := \left\{\x \in D([0,T], E) \midset  \forall t \in [0,T], \norm{\Delta \x_t} > \varepsilon \Rightarrow \Delta \x_t \in J\right\}.
\]
In particular, $\x \in J^0$ if and only if all the jumps of $\x$ are in $J$. In the case that $E$ is a Lie group and $B \subseteq E$, we set $B^\varepsilon := J^\varepsilon$ where $J := \left\{(x,y) \midset x^{-1}y \in B\right\}$.

For a path function $\phi : J \mapsto C([0,1], E)$, we now define a map $\x \mapsto \x^\phi$ from $J^0$ to $C([0,T], E)$.
The construction is similar to the method considered in~\cite{FrizShekhar12} and~\cite{Williams01} of adding fictitious times over which to traverse the jumps.

Consider $\x \in J^0$. Let $t_1, t_2, \ldots$ be the jump times of $\x$ ordered so that $\norm{\Delta \x_{t_1}} \geq \norm{\Delta \x_{t_2}} \geq \ldots$ with $t_j < t_{j+1}$ if $\norm{\Delta \x_{t_j}} = \norm{\Delta \x_{t_{j+1}}}$. Let $0 \leq m \leq \infty$ be the number of jumps of $\x$. We call the sequence $(t_j)_{j=1}^m$ the \emph{canonically ordered jump times} of $\x$.

We henceforth fix a strictly decreasing sequence $(r_i)_{i=1}^\infty$ of positive real numbers such that $\sum_{i=1}^\infty r_i < \infty$. Define the sequence $(n_k)_{k = 0}^m$ by $n_0 = 0$, and for $1 \leq k < m+1$ let $n_k$ be the smallest integer such that $n_k > n_{k-1}$ and $r_{n_k} < \norm{\Delta\x_{t_{k}}}$.

Let $r := \sum_{k=1}^m r_{n_k}$. Define the strictly increasing (c{\`a}dl{\`a}g) function
\[
\tau : [0,T] \mapsto [0, T+r], \; \; \tau(t) = t + \sum_{k=1}^m r_{n_k} \1{t_k \leq t}.
\]
Note that $\tau(t-) < \tau(t)$ if and only if $t = t_k$ for some $1 \leq k < m+1$. Moreover, note that the interval $[\tau(t_k-), \tau(t_k))$ is of length $r_{n_k}$.

We now define $\widehat \x \in C([0,T+r],E)$ by
\[
\widehat \x_s =
\begin{cases} \x_t &\mbox{if $s = \tau(t)$ for some $t \in [0,T]$,} \\ 
\phi(\x_{t_k-}, \x_{t_k})_{(s-\tau(t_k-))/r_{n_k}} &\mbox{if $s \in [\tau(t_k-), \tau(t_k))$ for some $1 \leq k < m+1$}.
\end{cases}
\]
Denote by $\tau_r$ the linear bijection from $[0,T]$ to $[0,T+r]$. We finally define
\[
\x^\phi := \widehat \x \circ \tau_r \in C([0,T], E)
\]
and the associated time change
\[
\tau_\x := \tau_r^{-1} \circ \tau : [0,T] \mapsto [0,T].
\]
We call the map $\x \mapsto \x^\phi$ from $J^0$ to $C([0,T], E)$ the \emph{connecting map}.

\subsection{Measurability and continuity}

The main result of this subsection is a continuity property of the connecting map, which we summarise in Proposition~\ref{prop_convFromDToC}.

For a subset $J \subseteq E \times E$, we say that a path function $\phi : J \mapsto C([0,T], E)$ \emph{shrinks on the diagonal} if for every bounded set $B \subseteq J$ and $\varepsilon > 0$, there exists $\delta > 0$ such that for all $(x,y) \in B$ with $d(x,y) < \delta$
\[
\sup_{t \in [0,1]} d(\phi(x,y)_t, y) < \varepsilon.
\]

\begin{remark}\label{remark_leftInvShrinks}
Observe that every left-invariant, endpoint continuous path function defined on a subset of a Lie group shrinks on the diagonal.
\end{remark}

\begin{lemma}\label{lem_contFromSkor}
Consider $J\subseteq E\times E$ and a path function $\phi : J \mapsto C([0,1], E)$ which shrinks on the diagonal. Suppose that $\phi$ is endpoint continuous on a subset $K \subseteq J$.

Let $\x \in K^0$ and a sequence $\x(n) \in J^0$ such that $\x(n) \rightarrow \x$ in the Skorokhod topology as $n \rightarrow \infty$, and such that for every $\varepsilon > 0$, there exists $n_0 > 0$ such that $\x(n) \in K^\varepsilon$ for all $n \geq n_0$.

Then $\lim_{n \rightarrow \infty} d_{\infty;[0,T]}(\x(n)^\phi, \x^\phi) = 0$.
\end{lemma}

\begin{proof}
Let $\varepsilon > 0$. By uniform continuity of $\x^\phi$, there exists $\eta > 0$ such that
\[
\sup_{|s-t| < \eta} d(\x^\phi_t, \x^\phi_s) < \varepsilon.
\]

Let $t_1, t_2, \ldots$ be the canonically ordered jump times of $\x$, and denote $[s_i, u_i) := [\tau_\x(t_i-), \tau_\x(t_i))$. For another element $\y\in D([0,T], E)$, let $t_1', t_2', \ldots$ be the jump times of $\y$ and $[s_i', u_i') := [\tau_\y(t_i'-), \tau_\y(t_i'))$. Let $(n_k)_{k=0}^m$ and $(n_k')_{k=1}^{m'}$ be the corresponding sequences for $\x$ and $\y$ respectively (where $m$ and $m'$ are the number of jumps of $\x$ and $\y$).

Denote by $\Lambda^*$ the set of continuous, strictly increasing bijections $\lambda : [0,T] \mapsto [0,T]$ and let $\id \in \Lambda^*$ be the identity map. Consider on $D([0,T],E)$ the Skorokhod metric
\[
\sigma(\x,\y) := \inf_{\lambda \in \Lambda^*} \max\{d_{\infty;[0,T]}(\lambda, \id), d_{\infty;[0,T]}(\x\circ \lambda, \y)\}.
\]

%Recall that $\x$ is fixed and $\y$ is any other element of $D([0,T], E)$.
Let $\delta > 0$ (which we shall send to zero), and suppose that there exists $\lambda \in \Lambda^*$ such that $d_{\infty;[0,T]}(\x \circ \lambda, \y) < \delta$ and $d_{\infty;[0,T]}(\lambda,\id) < \delta$.

Observe that there exists an integer $k \geq 1$ such that $\lambda(t_i') = t_i$ for all $i \in \{1, \ldots, k\}$, and, denoting by $v_1 < \ldots < v_k$ (resp. $v_1' < \ldots < v_k'$) the same set of points as $t_1, \ldots, t_k$ (resp. $t_1', \ldots, t_k'$) ordered monotonically, it holds that $\lambda(t') \in [v_i, v_{i+1})$ for all $t' \in [v_i', v_{i+1}')$.  In particular, it holds that $d(\y_{t_i'-},\x_{t_i-}), d(\y_{t_i'}, \x_{t_i}) < \delta$ for all $i \in \{1, \ldots, k\}$.

Moreover, by choosing $\delta$ sufficiently small, we can assume that $n_i = n_i'$ for all $i \in \{1,\ldots, k\}$ and that $k$ is sufficiently large so that, by making $\sum_{j = k+1}^\infty r_j$ sufficiently small, it holds that $|\tau_{\y}(t') - \tau_\x(\lambda(t'))| < \eta$ for all $t' \in [v_i', v_{i+1}')$ (this is where we have used the condition $r_{n_j} < \norm{\Delta\x_{t_j}}$).

In particular, it holds that that for all $t' \in [v_i', v_{i+1}')$
\begin{equation}\label{eq_dyx}
\begin{split}
d(\y^\phi_{\tau_{\y}(t')}, \x^\phi_{\tau_{\y}(t')})
&\leq d(\y^\phi_{\tau_{\y}(t')}, \x^\phi_{\tau_\x(\lambda(t'))}) + d(\x^\phi_{\tau_\x(\lambda(t'))}, \x^\phi_{\tau_{\y}(t')}) \\
&< d(\y_{t'}, \x_{\lambda(t')}) + \varepsilon \\
&< \delta + \varepsilon.
\end{split}
\end{equation}
This covers all points not in an interval $[s_i',u_i')$ (note that no assumptions on $\phi$ were needed yet except that $\phi(x,y)$ itself is a continuous path for each $(x,y) \in J$).

Now we let $\y = \x(n)$ for some $n$. We may choose $n$ sufficiently large, such that $\sigma(\x,\y) < \delta$ and such that $\Delta\y_{t_i'} \in K$ for all $i \in \{1, \ldots, k\}$.

Due to the continuity of $\phi : K \mapsto C([0,1], E)$ at $\Delta\x_{t_i} \in K$, it follows that for all $w' \in [s_i', u_i')$ and $i \in \{1,\ldots, k\}$, there exists $w \in [s_i, u_i)$ such that $|w' - w| < \eta$ and $d(\x^\phi_w, \y^\phi_{w'}) < \varepsilon$ , so that
\[
d(\y^\phi_{w'}, \x^\phi_{w'}) < d(\y^\phi_{w'}, \x^\phi_w) + d(\x^\phi_w, \x^\phi_{w'}) < 2\varepsilon.
\]
%(this now only requires continuity of $\phi$ on $K$ and not any additional properties).

Finally, since $\phi$ shrinks on the diagonal, we may further decrease $\delta$ if necessary so that for all $w' \in [s_j', u_j')$ and $j > k$, it holds that $|w' - u_j'| < \eta$ and $d(\y^\phi_{w'}, \y^\phi_{u_j'}) < \varepsilon$. 
Now $u_j' = \tau_\y(t')$ for some $t' \in [0,T]$, and thus, by~\eqref{eq_dyx},
\[
d(\y^\phi_{u_j'}, \x^\phi_{u_j'}) < \delta + \varepsilon,
\]
from which it follows that
\[
d(\y^\phi_{w'},\x^\phi_{w'}) \leq d(\y^\phi_{w'}, \y^\phi_{u_j'}) + d(\y^\phi_{u_j'}, \x^\phi_{u_j'}) + d(\x^\phi_{u_j'}, \x^\phi_{w'}) < \delta + 3\varepsilon.
\]
\end{proof}

In the following proposition, we equip all topological spaces with their respective Borel $\sigma$-algebras.

\begin{proposition}\label{prop_convFromDToC}
Suppose $E$ is a Polish space, $K \subseteq J \subseteq E\times E$ are measurable sets, and $\phi : J \mapsto C([0,1],E)$ is a measurable path function.

\begin{enumerate}[label={(\roman*)}]
\item \label{point:Meas} Then $J^\varepsilon$ is a measurable subset of $D([0,T], E)$ for all $\varepsilon \geq 0$, and the connecting map $\cdot^\phi : J^0 \mapsto C([0,T], E)$ is measurable.

\item \label{point:Cont} Let $\X$ be a $D([0,T], E)$-valued random variable such that $\X \in K^0$ a.s.. Let $(\X_n)_{n \geq 1}$ be a collection of $D([0,T],E)$-valued random variables such that $\X_n \in J^0$ a.s. and $\X_n\convd \X$ as $D([0,T],E)$-valued random variables.
Suppose further that for every $\varepsilon > 0$, $\lim_{n \rightarrow \infty}\PPP{\X_n \notin K^\varepsilon} = 0$, and that $\phi$ is endpoint continuous on $K$ and shrinks on the diagonal in $J$.

Then $\X_n^\phi \convd \X^\phi$ as $C([0,T],E)$-valued random variables.
\end{enumerate} 
\end{proposition}

\begin{proof}
\ref{point:Meas} Measurability of $J^\varepsilon$ is an easy consequence of the measurability of the finite-dimensional projections $\x \mapsto \x_t$ on $D([0,T],E)$ (\cite[Theorem~12.5]{Billingsley99}). To show measurability of the connecting map, note that it suffices to show that for every $s \in [0,T]$, the map $\x \mapsto \x^\phi(s)$ is measurable (cf.~\cite[Example~1.3]{Billingsley99}), which in turn follows easily from the construction of $\x^\phi$.

\ref{point:Cont}
Note that the condition $\lim_{n \rightarrow \infty}\PPP{\X_n \notin K^\varepsilon} = 0$ for all $\varepsilon > 0$ implies that
\[
\Y_n := (\X_n, \1{\X_n \notin K^{1}}, \1{\X_n \notin K^{1/2}}, \1{\X_n \notin K^{1/4}}, \ldots)
\]
is a sequence of $D([0,T],E) \times \{0,1\}^{\N}$-valued random variables which converges in law to $\Y := (\X, 0, 0, \ldots)$.
% (since the right term $(0,0,\ldots) \in \{0,1\}^\N$ is constant, this is due to the result that $(A_n, B_n) \convd (A, b)$ whenever $A_n\convd A$ and $B \convd b$, where $b$ is a constant).
The conclusion now follows from an application of the Skorokhod representation theorem~\cite[Theorem~3.30]{Kallenberg97} and Lemma~\ref{lem_contFromSkor}.
\end{proof}

\subsection{$p$-variation}\label{subsec:pVarConnect}

The main result of this subsection is Proposition~\ref{prop_pvarPsi}, which shows that a $p$-approximating path function does not significantly increase the $p$-variation of a c{\`a}dl{\`a}g path.

We first require the following lemma, whose proof was inspired by~\cite[Lemma~22]{FrizShekhar12}.

\begin{lemma}\label{lem:xpvarBound}
Let $(E,d)$ be a metric space and $\x : [0,T] \mapsto E$ a function (not necessarily c{\`a}dl{\`a}g). Let $(I_n)_{n \geq 1}$ be a countable collection of disjoint open subintervals of $[0,T]$ and set $I := \cup_{n \geq 1}I_n$. Define $\y : [0,T] \mapsto E$ by $\y_t = \x_t$ for $t \in [0,T]\setminus I$, and $\y_t = \x_{c_n}$ for $t \in (c_n,d_n) := I_n$. 
Let $p > 0$ and $C = 1+2\cdot 2^{(p-1)\vee 0} + 3^{(p-1)\vee 0}$. Then
\[
\norm{\x}^p_{\pvar;[0,T]} \leq (C+3^{(p-1)\vee 0})\sum_{n=1}^\infty \norm{\x}^p_{\pvar;[c_n,d_n]} + C\norm{\y}^p_{\pvar;[0,T]}.
\]
\end{lemma}

\begin{proof}
Define the super-additive functions $\omega_\x(s,t) = \norm{\x}_{\pvar;[s,t]}^p$ and $\omega_\y(s,t) = \norm{\y}_{\pvar;[s,t]}^p$.
Let $\DD = (t_0, t_1, \ldots, t_k)$ be a partition of $[0,T]$.

Denote by $J_1 = (a_1,b_1), \ldots, J_m = (a_m, b_m)$ those intervals $I_n$ which contain some partition point $t_j \in \DD$, ordered so that $b_j < a_{j+1}$ for all $j \in \{1,\ldots m-1\}$. Call a \emph{block} a consecutive run of partition points $t_j, t_{j+1}, \ldots, t_n$ either all in $J_r$ for some $r \in \{1,\ldots, m\}$, in which case we call it red, or all outside $I$, in which case we call it blue. We call a consecutive pair of partition points $t_i, t_{i+1} \in \DD$ which lie in different blocks either red-red, red-blue, or blue-red depending on their respective blocks (note there are no blue-blue pairs). For convenience of notation, set $J_0 = (a_0,b_0) := (-\infty, 0)$ and $J_{m+1} = (a_{m+1},b_{m+1}) := (T, \infty)$.

For $r \in \{1,\ldots, m\}$ and a red block $t_j, t_{j+1}, \ldots, t_n$ in $J_r$ we have
\[
\sum_{i=j+1}^{n} d(\x_{t_{i-1}}, \x_{t_i})^p \leq \omega_\x(a_r,b_r).
\]
For $r \in \{0,\ldots, m\}$ and a blue block $t_j, t_{j+1}, \ldots, t_n$ between $J_r, J_{r+1}$ we have
\[
\sum_{i=j+1}^{n} d(\x_{t_{i-1}}, \x_{t_i})^p
= \sum_{i=j+1}^{n} d(\y_{t_{i-1}}, \y_{t_i})^p
\leq \omega_\y(b_r,a_{r+1}).
\]
For $r \in \{1,\ldots, m\}$ and a red-blue pair $t_i, t_{i+1}$ with $t_i \in J_r$ and $t_{i+1}$ between $J_r,J_{r+1}$, we have
\begin{align*}
d(\x_{t_i}, \x_{t_{i+1}})^p
&\leq 2^{(p-1)\vee 0}\left[d(\x_{t_i}, \x_{b_r})^p + d(\x_{b_r}, \x_{t_{i+1}})^p\right] \\
&\leq 2^{(p-1)\vee 0}\left[\omega_\x(a_r,b_r) + \omega_\y(b_r, t_{i+1})\right].
\end{align*}
For $r \in \{0,\ldots, m-1\}$ and a blue-red pair $t_i, t_{i+1}$ with $t_i$ between $J_{r},J_{r+1}$ and $t_{i+1} \in J_{r+1}$, we have
\begin{align*}
d(\x_{t_i}, \x_{t_{i+1}})^p
&\leq 2^{(p-1)\vee 0}\left[d(\x_{t_i}, \x_{a_{r+1}})^p + d(\x_{a_{r+1}}, \x_{t_{i+1}})^p\right] \\
&\leq 2^{(p-1)\vee 0}\left[\omega_\y(t_i,a_{r+1}) + \omega_\x(a_{r+1}, b_{r+1})\right].
\end{align*}
Finally for $r \in \{1,\ldots, m-1\}$ a red-red pair $t_i, t_{i+1}$ with $t_i \in J_r$ and $t_{i+1} \in J_{r+1}$, we have
\begin{align*}
d(\x_{t_i}, \x_{t_{i+1}})^p
&\leq 3^{(p-1)\vee 0}\left[d(\x_{t_i}, \x_{b_r})^p + d(\x_{b_r}, \x_{a_{r+1}})^p + d(\x_{a_{r+1}}, \x_{t_{i+1}})^p\right] \\
&\leq 3^{(p-1)\vee 0}\left[\omega_\x(a_r, b_r) + \omega_\y(b_r,a_{r+1}) + \omega_\x(a_{r+1}, b_{r+1})\right].
\end{align*}

Since $\omega_\x$ and $\omega_\y$ are super-additive, the conclusion now follows from splitting the sum $\sum_{i=1}^k d(\x_{t_{i-1}}, \x_{t_i})^p$ into blocks and consecutive pairs in different blocks.
\end{proof}

\begin{corollary}\label{cor_pvarNotIncrease}
Let $p \geq 1$, $J \subseteq E\times E$, $\phi : J \mapsto C([0,T], E)$ a path function, and $\z \in J^0$. Suppose that there exists $C > 0$ such that $\norm{\phi(\z_{t-},\z_t)}_{\pvar;[0,1]} \leq Cd(\z_{t-},\z_t)$ for every jump time $t$ of $\z$.
Then
\[
\norm{\z^\phi}^p_{\pvar;[0,T]} \leq \left( 1+2^p+3^{p-1} + C^p(1+2^p+2\cdot 3^{p-1}) \right)\norm{\z}^p_{\pvar;[0,T]}.
\]
\end{corollary}

The following result now follows immediately from Corollary~\ref{cor_pvarNotIncrease} and from the definition of a $p$-approximating path function (Definition~\ref{def_pathFunc}).

\begin{proposition}\label{prop_pvarPsi}
Let $p \geq 1$, $J \subseteq E\times E$, and $\phi : J \mapsto C([0,T], E)$ a $p$-approximating path function. There exists a continuous function $\psi : [0,\infty) \mapsto [0,\infty)$ such that for some $R,\varepsilon > 0$, $\psi(x) = Rx$ for all $x \in [0,\varepsilon)$, and such that $\norm{\x^\phi}_{\pvar;[0,T]} \leq \psi(\norm{\x}_{\pvar;[0,T]})$ for all $\x \in J^0$.
\end{proposition}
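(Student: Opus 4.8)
The plan is to deduce the estimate directly from Corollary~\ref{cor_pvarNotIncrease}, the only work being to package the cut-off dependence of the $p$-approximating constant into the function $\psi$ while keeping the prescribed linear behaviour near the origin.

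First I would invoke the defining property of a $p$-approximating path function in the following indexed form: for every $n \in \N$ there is a constant $C_n > 0$ such that $\norm{\phi(x,y)}_{\pvar;[0,1]} \le C_n\, d(x,y)$ whenever $(x,y)\in J$ and $d(x,y) < n$. After replacing $C_n$ with $\max\{C_1,\dots,C_n\}$ we may assume $(C_n)_{n\ge1}$ is non-decreasing. Put
\[
R_n := \bigl(1+2^p+3^{p-1}+C_n^p(1+2^p+2\cdot3^{p-1})\bigr)^{1/p},
\]
which is non-decreasing in $n$, and set $R := R_1$.

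Next, fix $\x \in J^0$; there is nothing to prove if $\norm{\x}_{\pvar;[0,T]} = \infty$, so assume $v := \norm{\x}_{\pvar;[0,T]} < \infty$. Every jump time $t$ of $\x$ satisfies $d(\x_{t-},\x_t) = \norm{\Delta\x_t} \le v < \floor v + 1 =: n$, so the $p$-approximating bound with constant $C_n$ applies at each jump of $\x$. Corollary~\ref{cor_pvarNotIncrease} then gives
\[
\norm{\x^\phi}_{\pvar;[0,T]} \;\le\; R_n\, v \;=\; R_{\floor v + 1}\, v ,
\]
and in particular $\norm{\x^\phi}_{\pvar;[0,T]} \le R\,\norm{\x}_{\pvar;[0,T]}$ whenever $\norm{\x}_{\pvar;[0,T]} < 1$.

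Finally I would build a continuous majorant of the step-in-$\floor{x}$ function $x \mapsto R_{\floor x + 1}\, x$ that is exactly linear near $0$. Fix $\varepsilon \in (0,1)$ and let $\psi : [0,\infty)\to[0,\infty)$ be the piecewise-linear function equal to $Rx$ on $[0,\varepsilon]$ and interpolating linearly on $[\varepsilon,1],[1,2],[2,3],\dots$ between the values $R\varepsilon$ at $\varepsilon$ and $n R_{n+1}$ at each integer $n\ge1$. Since $R_{n+1}\le R_{n+2}$ and $\varepsilon<1$, each of these linear pieces has slope at least $R_{n+1}$ (respectively at least $R_1$ on $[\varepsilon,1]$), and since $\psi(n)=nR_{n+1}$ and $\psi(\varepsilon)=R\varepsilon$ one checks $\psi(x)\ge R_{\floor x + 1}\, x$ for all $x\ge0$. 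Combined with the displayed estimate this yields $\norm{\x^\phi}_{\pvar;[0,T]}\le\psi(\norm{\x}_{\pvar;[0,T]})$ for every $\x\in J^0$, while $\psi\equiv Rx$ on $[0,\varepsilon)$ by construction. The argument is essentially bookkeeping; the only point needing care is this last step, where one must take $\varepsilon$ strictly below $1$ so that the continuous interpolation has room to climb from the line $Rx$ up to the steeper slopes $R_n$ forced by paths of larger $p$-variation, without disturbing the exact identity $\psi=Rx$ on a neighbourhood of the origin.
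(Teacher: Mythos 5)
Your argument is correct and follows exactly the route the paper intends: the paper's proof consists of the remark that the proposition is immediate from Corollary~\ref{cor_pvarNotIncrease} together with the definition of a $p$-approximating path function, and your write-up just makes the bookkeeping explicit (every jump of $\x$ has size at most $\norm{\x}_{\pvar;[0,T]}$, so the cut-off constant $C(r)$ from the definition can be chosen in terms of that norm, and the resulting step bound $x \mapsto R_{\floor{x}+1}x$ is majorised by a continuous piecewise-linear $\psi$ that equals $Rx$ near the origin). No gaps.
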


\section{Infinite $p$-variation of L{\'e}vy processes in Lie groups}\label{appendix:InfpVar}

The purpose of this section is to establish conditions under which sample paths of a L{\'e}vy process have \emph{infinite} $p$-variation. The methods are all well-known for the case $G = \R^d$, so we mostly provide indications of how they extend to a general Lie group.
Throughout this section, we use the notation from Section~\ref{sec:LieGroupIidArrays}.

Let $\X$ be a L{\'e}vy process in a Lie group $G$ with triplet $(A, B, \Pi)$ and let $\tau = \inf\left\{t \geq 0 \midset \X_t \notin U\right\}$ be the first exit time of $\X$ from $U$.
Let $y_0 \in \g\setminus \log(U)$ be a distinguished point and consider the $\g$-valued process
\[
\Y_t = 
\begin{cases} \log(\X_t) &\mbox{if } t \in [0,\tau) \\ 
\Y_t = y_0 &\mbox{if } t \geq \tau.
\end{cases}
\]
For $\varepsilon > 0$, define the $\g$-valued process $\Y^\varepsilon_t := \varepsilon^{-1/2}(\Y_{\varepsilon t})$ for $t \in [0,1]$. Let $\B$ be a $\g$-valued centred Brownian motion starting from zero with covariance matrix $(A^{i,j})_{i,j=1}^m$ with respect to the basis $u_1,\ldots, u_m$.

\begin{lemma}\label{lem:convToB}
It holds that $\Y^\varepsilon \xrightarrow[\varepsilon \rightarrow 0]{\DD} \B$ as $D_o([0,1], \g)$-valued random variables.
\end{lemma}

\begin{proof}
Note that for every $\varepsilon > 0$, $\Y^\varepsilon$ can be considered as a $\g$-valued Markov process (for which every point outside $\varepsilon^{-1/2}\log(U)$ is absorbing). Writing $L^\varepsilon$ and $L_\B$ for the generators of $\Y^\varepsilon$ and $\B$ respectively, it suffices to show that $L^\varepsilon f \rightarrow L_\B f$ in $C_0(\g)$ for all $f \in C^\infty_c(\g)$ (see, e.g.,~\cite[Chapter~17]{Kallenberg97}). This in turn follows from writing the generator of $\X$ in the $\log$ chart and performing a straightforward limiting argument.
\end{proof}

\begin{proposition}\label{prop_xiI}
Suppose $A^{i,i} > 0$ for some $i \in \{1,\ldots, m\}$. Then
\[
\PPP{\limsup_{t \rightarrow 0} t^{-1/2}|\xi_i(\X_t)| = \infty} = 1.
\]
\end{proposition}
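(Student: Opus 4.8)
The plan is to prove $\PPP{\limsup_{t\to 0}t^{-1/2}|\xi_i(\X_t)| = \infty} = 1$ by transferring the small‑time behaviour of $\X$, via the scaling limit of Corollary~\ref{cor_convToB}, to that of the nondegenerate Brownian coordinate $\B^i$. Write $L := \limsup_{t\to 0}t^{-1/2}|\xi_i(\X_t)| \in [0,\infty]$; since $\{L = \infty\} = \bigcap_{M\in\N}\{L\geq M\}$, it suffices to show $\PPP{L\geq M} = 1$ for every $M > 0$. First I would record the elementary facts. Let $\tau = \inf\{t\geq 0\mid \X_t\notin U\}$; as $\X$ is c\`adl\`ag and $\X_0 = 1_G$ lies in the open set $U$, we have $\tau > 0$ a.s. and $\PPP{\tau\leq\varepsilon}\to 0$ as $\varepsilon\to 0$. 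On $\{t<\tau\}$ we have $\X_t\in U$, and since $\xi\equiv\log$ on $U$ this gives $\xi_i(\X_t) = \Y^i_t$, the $i$-th coordinate of $\Y_t = \log\X_t$ (the absorbed process of Corollary~\ref{cor_convToB}, with starting point $1_G$, so $\Y_0 = 0$). Set $a_\varepsilon := \sup_{u\in(0,\varepsilon]}u^{-1/2}|\xi_i(\X_u)| \in [0,\infty]$; by definition of $\limsup$, $a_\varepsilon\downarrow L$ as $\varepsilon\downarrow 0$, so $\{a_\varepsilon>M\}$ is a decreasing family of events with $\bigcap_{\varepsilon>0}\{a_\varepsilon>M\}\subseteq\{L\geq M\}$, whence $\PPP{L\geq M}\geq\lim_{\varepsilon\downarrow 0}\PPP{a_\varepsilon>M}$. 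On the Brownian side, since $A_{i,i}>0$ the $i$-th coordinate $\B^i$ is a one-dimensional Brownian motion of variance $A_{i,i}\,t>0$, so by the law of the iterated logarithm $\limsup_{s\to 0}s^{-1/2}|\B^i_s| = \infty$ a.s., and in particular $\sup_{s\in(0,1]}s^{-1/2}|\B^i_s| = \infty$ a.s.

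Next I would introduce, for $\delta\in(0,1]$, the functional $\Phi_\delta : D([0,1],\R)\to[0,\infty)$, $\Phi_\delta(\x) = \sup_{s\in[\delta,1]}s^{-1/2}|\x_s|$. It is Borel measurable, and continuous at every continuous path, since Skorokhod convergence to a continuous limit is uniform convergence and then $|\Phi_\delta(\x_n) - \Phi_\delta(\x)| \leq \delta^{-1/2}\norm{\x_n - \x}_\infty$. By Corollary~\ref{cor_convToB} and continuity of the coordinate projection $\g\to\R$, the $i$-th coordinate processes $\Y^{\varepsilon,i}$ of $\Y^\varepsilon$ satisfy $\Y^{\varepsilon,i}\convd\B^i$ in $D_o([0,1],\R)$ as $\varepsilon\to 0$; since $\B^i$ has a.s. continuous paths, the continuous mapping theorem gives $\Phi_\delta(\Y^{\varepsilon,i})\convd\Phi_\delta(\B^i)$, and the portmanteau theorem applied to the open set $(M,\infty)$ gives $\liminf_{\varepsilon\to 0}\PPP{\Phi_\delta(\Y^{\varepsilon,i})>M} \geq \PPP{\Phi_\delta(\B^i)>M}$. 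The key step is then the exact scaling identity: since $\Y^\varepsilon_s = \varepsilon^{-1/2}\Y_{\varepsilon s}$, substituting $u = \varepsilon s$ yields
\[
\Phi_\delta(\Y^{\varepsilon,i}) = \sup_{u\in[\varepsilon\delta,\,\varepsilon]}u^{-1/2}|\Y^i_u| .
\]
On $\{\tau>\varepsilon\}$ the absorbed and original processes agree on $(0,\varepsilon]$, so there $\Phi_\delta(\Y^{\varepsilon,i}) = \sup_{u\in[\varepsilon\delta,\varepsilon]}u^{-1/2}|\xi_i(\X_u)| \leq a_\varepsilon$; hence $\PPP{\Phi_\delta(\Y^{\varepsilon,i})>M} \leq \PPP{a_\varepsilon>M} + \PPP{\tau\leq\varepsilon}$.

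To conclude, I would let $\varepsilon\to 0$: because $\PPP{\tau\leq\varepsilon}\to 0$ and $\varepsilon\mapsto\PPP{a_\varepsilon>M}$ is monotone (so its $\liminf$ equals $\lim_{\varepsilon\downarrow 0}$), combining the last inequality with the portmanteau bound gives $\lim_{\varepsilon\downarrow 0}\PPP{a_\varepsilon>M} \geq \PPP{\Phi_\delta(\B^i)>M}$, and therefore $\PPP{L\geq M} \geq \PPP{\Phi_\delta(\B^i)>M}$ for every $\delta\in(0,1]$. Finally, letting $\delta\downarrow 0$, $\Phi_\delta(\B^i)\uparrow\sup_{s\in(0,1]}s^{-1/2}|\B^i_s| = \infty$ a.s., so $\PPP{\Phi_\delta(\B^i)>M}\to 1$, giving $\PPP{L\geq M} = 1$ as required, and hence $\PPP{L = \infty} = 1$.

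The step I expect to need the most care is reaching the germ functional $L$ of $\X$ from that of $\B$ despite $\Phi_\delta$ (and $L$ itself) being discontinuous on path space and the convergence $\Y^\varepsilon\convd\B$ holding only at macroscopic scale: the resolution is precisely the scaling invariance of the near-zero functional under $\Y\mapsto\Y^\varepsilon$ together with the monotone convergence $a_\varepsilon\downarrow L$, and one must keep careful track of the fact that $\Y^\varepsilon$ is built from the \emph{absorbed} process, so that the comparison $\Phi_\delta(\Y^{\varepsilon,i})\leq a_\varepsilon$ is valid only up to the vanishing error $\PPP{\tau\leq\varepsilon}$.
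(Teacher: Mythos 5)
Your argument is correct, but it reaches the conclusion by a genuinely different route than the paper. The paper uses Corollary~\ref{cor_convToB} only through a one-time marginal statement ($\PPP{\varepsilon^{-1/2}|\xi_i(\X_\varepsilon)|>c}>\delta$ uniformly for small $\varepsilon$) and then upgrades this to an almost sure statement along the geometric sequence $\varepsilon_k=3^{-k}\varepsilon_0$: it exploits stationarity and independence of the increments $\X_{\varepsilon_k,\varepsilon_{k-1}}$ via the second Borel--Cantelli lemma, and needs a Campbell--Baker--Hausdorff estimate together with the first Borel--Cantelli lemma to transfer the lower bound from the increments back to $\xi_i(\X_{\varepsilon_k})$ itself. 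You instead use the full functional weak convergence $\Y^{\varepsilon}\convd\B$, the exact scaling identity $\Phi_\delta(\Y^{\varepsilon,i})=\sup_{u\in[\varepsilon\delta,\varepsilon]}u^{-1/2}|\Y^i_u|$, the monotone limits $a_\varepsilon\downarrow L$ and $\Phi_\delta(\B^i)\uparrow\sup_{(0,1]}s^{-1/2}|\B^i_s|$, and the law of the iterated logarithm for the nondegenerate coordinate $\B^i$; no Borel--Cantelli argument, no explicit use of independent increments, and no CBH cross-term estimates appear, with the exit-time event $\{\tau\le\varepsilon\}$ handled as a vanishing error exactly as you indicate. What each approach buys: yours is structurally cleaner and makes transparent that the result is a pure consequence of the diffusive scaling limit plus the corresponding a.s. statement for Brownian motion (at the cost of importing the LIL, or at least $\limsup_{s\to0}s^{-1/2}|\B^i_s|=\infty$, which one could alternatively get from Blumenthal's zero--one law and Brownian scaling); the paper's proof uses only a marginal consequence of the convergence and elementary tools, and its Borel--Cantelli/independent-increments mechanism is the same device that is then recycled in Corollary~\ref{cor_xiInfVar} via the Vitali covering argument. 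All the delicate points in your write-up (measurability and continuity of $\Phi_\delta$ at continuous paths, continuity of the coordinate projection in the Skorokhod topology, $\PPP{\tau\le\varepsilon}\to0$, portmanteau for the open set $(M,\infty)$) are correctly identified and correctly resolved.
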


\begin{proof}
Let $c > 0$. Lemma~\ref{lem:convToB} implies that there exist $\delta, \varepsilon_0 > 0$ such that for all $0 < \varepsilon \leq \varepsilon_0$
\[
\PPP{\varepsilon^{-1/2}|\xi_i(\X_\varepsilon)| > c} > \delta.
\]
Observe that, by the CBH formula, there exist a neighbourhood $V$ of $1_G$ and a constant $C > 0$ such that for all $x,y \in V$
\begin{equation}\label{eq_boundinV}
|\xi_i(x^{-1}y)| \leq |\xi_i(y) - \xi_i(x)| + C|\xi(x)|^2 + C|\xi(y)|^2.
\end{equation}
For $k \geq 1$, define $\varepsilon_k := 3^{-k}\varepsilon_0$. Then by Lemma~\ref{lem:convToB} and the first Borel–Cantelli lemma there exists a strictly increasing sequence $(k(n))_{n \geq 1}$ such that
\begin{equation}\label{eq_PlimsupC}
\PPP{\limsup_{n \rightarrow \infty} \varepsilon_{k(n)}^{-1/2}C\left(|\xi(\X_{\varepsilon_{k(n)}})|^2 + |\xi(\X_{\varepsilon_{k(n)-1}})|^2\right) > c/2} = 0.
\end{equation}
On the other hand, since $\X_{\varepsilon_k,\varepsilon_{k-1}} \eqd \X_{\varepsilon_{k-1} - \varepsilon_k}$ and the r.v.'s $(\X_{\varepsilon_k, \varepsilon_{k-1}})_{k \geq 1}$ are independent, the second Borel–Cantelli lemma yields
\[
\PPP{\limsup_{n \rightarrow \infty} \varepsilon_{k(n)}^{-1/2}|\xi_i(\X_{\varepsilon_{k(n)}, \varepsilon_{k(n)-1}})| > c } = 1.
\]
Since $\lim_{t \rightarrow 0}\X_t = 1_G$ a.s., we now readily deduce from~\eqref{eq_boundinV},~\eqref{eq_PlimsupC}, and the definition of $\varepsilon_k$ that
\[
\PPP{\limsup_{k \rightarrow \infty} \varepsilon_{k}^{-1/2}|\xi_i(\X_{\varepsilon_{k}})| > \frac{c}{4\sqrt 3}} = 1.
\]
As $c>0$ was arbitrary, the conclusion follows.
\end{proof}

\begin{corollary}\label{cor_xiInfVar}
Suppose $A^{i,i} > 0$ for some $i \in \{1,\ldots, m\}$. Then
\[
\PPP{\sup_{\DD \subset [0,1]} \sum_{t_k \in \DD} |\xi_i(\X_{t_k,t_{k+1}})|^2 = \infty} = 1.
\]
\end{corollary}

\begin{proof}
By Proposition~\ref{prop_xiI}, $\limsup_{t \rightarrow 0} t^{-1}|\xi_i(\X_t)|^2 = \infty$ a.s..
% whenever $A^{i,i} > 0$.
Since $\X$ has stationary and independent increments, the conclusion follows from an application of the Vitali covering argument (see~\cite[Proposition p. 68]{Bretagnolle72II}, or~\cite[Theorem~13.69]{FrizVictoir10}).
\end{proof}

The following is a form of the classical Blumenthal--Getoor index~\cite{BlumenthalGetoor61} adapted to the setting of Lie groups. Recall the definitions of $\Gamma_i$ and $K$ from Section~\ref{subsec:approxWalk}.

\begin{proposition}[Blumenthal--Getoor index]\label{prop_BlumenthalGetoor} Let $i \in \{1,\ldots, m\}$ and $q > 0$. Then
\begin{equation}\label{eq_infqXiVar}
\PPP{\sup_{\DD \subset [0,1]} \sum_{t_k \in \DD} |\xi_i(\X_{t_k,t_{k+1}})|^q = \infty} = 1
\end{equation}
if either
\begin{enumerate}[label={(\roman*)}]
\item \label{point_qInGamma} $q \in \Gamma_i$, or

\item \label{point_iInK} $i \in K$ and $q < 1$.
\end{enumerate}
\end{proposition}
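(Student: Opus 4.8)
## Proof proposal for Proposition~\ref{prop_BlumenthalGetoor}

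The plan is to reduce both cases to a one-dimensional statement about the real-valued process $t \mapsto \xi_i(\X_t)$ near $t = 0$, and then invoke the Vitali covering argument exactly as in the proof of Corollary~\ref{cor_xiInfVar}. The key point, in both cases, is to show that
\[
\limsup_{t \rightarrow 0} t^{-1/q} |\xi_i(\X_t)| = \infty \quad \textnormal{a.s.,}
\]
after which stationarity and independence of increments, together with the Vitali covering lemma (\cite{Bretagnolle72II} Proposition~p.68, or~\cite{FrizVictoir10} Theorem~13.69) and the Campbell-Baker-Hausdorff bound~\eqref{eq_boundinV} controlling $|\xi_i(\X_{s,t})|$ in terms of $|\xi_i(\X_t) - \xi_i(\X_s)|$ plus quadratic error terms, yields~\eqref{eq_infqXiVar}. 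This is precisely the mechanism already used to pass from Proposition~\ref{prop_xiI} to Corollary~\ref{cor_xiInfVar}, so the only new work is establishing the $\limsup$ statement under hypotheses~\ref{point_qInGamma} and~\ref{point_iInK}.

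For case~\ref{point_qInGamma}, where $q \in \Gamma_i$, i.e. $\int_G |\xi_i(x)|^q \Pi(dx) = \infty$, I would argue via the jumps of $\X$. Since $\int_U |\xi_i(x)|^q \Pi(dx) = \infty$ and $\int_U |\xi(x)|^2 \Pi(dx) < \infty$ (so necessarily $q < 2$), one can find, for each $\delta > 0$, a set $V_\delta = \{x \in U : |\xi_i(x)| \in (a_\delta, b_\delta]\}$ of small diameter with $\Pi(V_\delta) = \infty$; more carefully, decompose $\{0 < |\xi_i| \le \varepsilon\}$ into annuli and use $\sum$ of $\Pi$-masses times $(\textnormal{radius})^q$ diverging. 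The jumps of $\X$ falling in $V_\delta$ form a Poisson process of infinite intensity, so in any interval $[0,t]$ there are infinitely many of them a.s.; each contributes a jump of $\xi_i(\X)$ of size comparable to its own (up to the quadratic CBH corrections, which are summable since $\int |\xi|^2 \Pi < \infty$). Choosing the annuli to shrink appropriately as $t \to 0$ forces $\limsup_{t\to 0} t^{-1/q}|\xi_i(\X_t)| = \infty$. The cleanest route is probably to mimic the classical Blumenthal–Getoor lower-index argument for $\R^d$-valued Lévy processes (see, e.g., the treatment via the Lévy–Itô decomposition), transported to $G$ through the coordinate map $\xi_i$: the compensator and the CBH error terms are both controlled by $\int_U |\xi|^2 \Pi(dx) < \infty$, hence negligible at the relevant scale.

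For case~\ref{point_iInK}, where $i \in K$ means $1 \notin \Gamma_i$ (so $\int_U |\xi_i| \Pi(dx) < \infty$) and $\widetilde B^i = B_i - \int_G \xi_i(x)\Pi(dx) \neq 0$, the divergence comes from the drift. Here $\xi_i(\X_t)$ has a genuine linear-in-$t$ term: writing the generator applied to (a cutoff of) $\xi_i$, one gets $\xi_i(\X_t) = \widetilde B^i\, t + o(t) + (\textnormal{martingale/jump part})$ as $t \to 0$, since the absolutely convergent compensated-jump integral can be combined with $B_i$ to produce the coefficient $\widetilde B^i$. Then $t^{-1/q}|\xi_i(\X_t)| \sim |\widetilde B^i|\, t^{1 - 1/q} \to \infty$ because $q < 1$. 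The main obstacle will be making this last asymptotic rigorous: one must show the remainder $\xi_i(\X_t) - \widetilde B^i t$ is $o(t^{1/q})$ a.s. along a sequence $t \to 0$, which requires a small-time estimate on the fluctuation part — either via a second-moment/Chebyshev bound combined with Borel–Cantelli along a geometric sequence $t_k = r^k$ (as in the proof of Proposition~\ref{prop_xiI}), or by directly quoting a law-of-large-numbers-type small-time result for Lévy processes. I expect case~\ref{point_iInK} to be the more delicate of the two, since it hinges on isolating the exact drift $\widetilde B^i$ after compensation and controlling the centred part uniformly; case~\ref{point_qInGamma} is essentially the textbook Blumenthal–Getoor argument and should go through routinely once the coordinate reduction is set up.
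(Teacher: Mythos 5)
Your strategy (prove $\limsup_{t\to 0} t^{-1/q}|\xi_i(\X_t)|=\infty$ a.s., then run the Vitali covering argument) is genuinely different from the paper's, which never touches a.s.\ small-time path behaviour: the paper shows $\lim_{t\to0}t^{-1}\EEE{f(\X_t)}=\infty$ for $f=1-\exp(-|\xi_i|^q)$ and converts this, via stationarity and independence of increments, into $\EEE{\exp(-\sum_{k}|\xi_i(\X_{(k-1)/n,k/n})|^q)}\to0$, verifying the generator blow-up with approximating functions $f_n$. As written, your case~(i) contains a concrete error: a set $\{x\in U: |\xi_i(x)|\in(a_\delta,b_\delta]\}$ with $a_\delta>0$ always has \emph{finite} $\Pi$-mass (on $U$ one has $|\xi|^2\geq|\xi_i|^2>a_\delta^2$ and $\int_U|\xi|^2\,d\Pi<\infty$), so there is no ``Poisson process of infinite intensity'' living on such an annulus; what diverges is only $\int(1\wedge|\xi_i|^q)\,d\Pi$. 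Moreover, the $\limsup$ statement you reduce to is exactly the delicate point when $q$ is the critical exponent, and you do not prove it; the ``classical Blumenthal--Getoor'' mechanism you invoke actually establishes infinite $q$-variation through the a.s.\ divergence of the jump sum $\sum_{s\le1}|\xi_i(\X_{s-}^{-1}\X_s)|^q$ (a Poisson point process criterion), not through a $\limsup$ at $0$; that jump-sum route, with partitions isolating the large jumps, would give a correct alternative proof of~(i), but your write-up does not carry it out.

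Case~(ii) has the more serious gap. The expansion $\xi_i(\X_t)=\widetilde B_i t+o(t)$ (a fortiori $o(t^{1/q})$ --- note that for $q<1$ one has $t^{1/q}=o(t)$, so you are even demanding more than you need) is false in general on a Lie group, because the group multiplication feeds quadratic functionals of the \emph{other} coordinates into the coordinate $\xi_i$. Concretely, on $G^2(\R^2)$ take the L\'evy process with generator $\tfrac12(u_1^2+u_2^2)+b\,u_3$, $b\neq0$, and let $i$ be the level-two coordinate: then $A_{i,i}=0$, $\Pi=0$, $i\in K$ with $\widetilde B_i=b$, yet $\xi_i(\X_t)-bt$ is the L\'evy area of the underlying Brownian motion, which scales exactly like $t$ (indeed $\limsup_{t\to0}|\xi_i(\X_t)-bt|/t=\infty$ a.s.), so it is not $o(t)$ and no second-moment/Borel--Cantelli estimate along $t_k=r^k$ will make it negligible against the drift. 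Thus the mechanism you propose for~(ii) breaks down even though the conclusion remains true in this example. The paper's proof sidesteps all of this: it chooses $\psi_n$ equal to $n\widetilde B_i^{-1}x$ near zero with $|\psi_n(x)|\le|x|^q$, applies the generator to $f_n=1-\exp(-\psi_n(\xi_i))$ so that the compensating integral recombines with $B_i$ to give $n(1+n^{-1}\int f_n\,d\Pi)$, and uses $q\notin\Gamma_i$, $q<1$ to kill the correction --- no pathwise small-time control is ever required. To repair your route you would have to prove the $\limsup$ statement in the presence of these order-$t$ group corrections, which is a substantially harder problem than the one you set out to solve.
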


\begin{proof}
Define $f \in C_c(G)$ by $f(x) = 1-\exp(-|\xi_i(x)|^q)$. Since $\X$ has independent and stationary increments, we can readily show (cf.~\cite[p. 499]{BlumenthalGetoor61}) that~\eqref{eq_infqXiVar} holds whenever
\begin{equation}\label{eq_fXInf}
\lim_{t \rightarrow 0}t^{-1}\EEE{f(\X_t)} = \infty.
\end{equation}
It thus suffices to show that~\eqref{eq_fXInf} holds in both cases of~\ref{point_qInGamma} and~\ref{point_iInK}:

\ref{point_qInGamma} Let $(\psi_n)_{n \geq 1}$ be a non-decreasing sequence of non-negative functions in $C^\infty(\R)$, each vanishing on some neighbourhood of zero, and such that $\lim_{n \rightarrow \infty}\psi_n (x) = |x|^{q}$ for all $x \in \R$. Then for $f_n(x) := 1-\exp(-\psi_n(\xi_i(x)))$, we have
\[
\lim_{t \rightarrow 0} t^{-1}\EEE{f_n(\X_t)}
= \int_G f_n(x) \Pi(dx)
\geq c\int_G \psi_n(\xi_i(x)) \Pi(dx) \xrightarrow[n \rightarrow \infty]{} \infty,
\]
where the final convergence follows from $q \in \Gamma_i$. Since $0 \leq f_n \leq f$, we obtain~\eqref{eq_fXInf}.

\ref{point_iInK} Since $q < 1$, for every integer $n \geq 1$ we can find $\psi_n \in C^\infty(\R)$ such that $|\psi_n(x)| \leq |x|^q$ for all $x \in \R$ and such that $\psi_n(x) = nx/\widetilde B^i$ for all $x$ in a neighbourhood $V_n$ of zero. Note that we may suppose $A^{i,i} = 0$ and $q \notin \Gamma_i$ (as otherwise the desired result follows by Corollary~\ref{cor_xiInfVar} or by case~\ref{point_qInGamma}). Then for $f_n(x) := 1-\exp(-\psi_n(\xi_i(x)))$, a straightforward calculation shows that
\[
\lim_{t \rightarrow 0}t^{-1}\EEE{f_n(\X_t)} = n \left(1 + n^{-1}\int_G f_n(x)\Pi(dx)\right) \xrightarrow[n \rightarrow \infty]{} \infty,
\]
where the final convergence follows from $q \notin \Gamma_i$ and $|f_n(x)| \leq C|\psi_n(\xi_i(x))|$. Since again $f_n \leq f$, we obtain~\eqref{eq_fXInf}.
\end{proof}

\end{appendix}

\printbibliography

\end{document}